\newtheorem{theorem}{Theorem}
\newtheorem{col}{Corollary}
\newtheorem{lemma}{Lemma}
\newtheorem{prop}{Proposition} 
\begin{document}

\begin{frontmatter}

\title{Strong Consistency of Reduced $K$-means Clustering}
\runtitle{Consistency of RKM Clustering}


\author{\fnms{Yoshikazu} \snm{Terada}\corref{}\ead[label=e1]{terada@sigmath.es.osaka-u.ac.jp}}
\address{Graduate School of Engineering Science, Osaka University, 1-3 Machikaneyama,\\ Toyonaka, Osaka, Japan\\ \printead{e1}}
\affiliation{Osaka University}

\runauthor{Y. Terada}

\begin{abstract}
Reduced $k$-means clustering is a method for clustering objects in 
a low-dimensional subspace.
The advantage of this method is that
both clustering of objects and low-dimensional subspace reflecting 
the cluster structure are simultaneously obtained.
In this paper,
the relationship between conventional $k$-means clustering 
and reduced $k$-means clustering is discussed.
Conditions ensuring almost sure convergence of 
the estimator of reduced $k$-means clustering as unboundedly increasing sample size have been presented.
The results for a more general model considering conventional $k$-means clustering and reduced $k$-means clustering
are provided in this paper.
Moreover, 
a new criterion and its consistent estimator are proposed to determine the optimal dimension number of a subspace, given the number of clusters.
\end{abstract}


\begin{keyword}
\kwd{clustering}
\kwd{dimension reduction}
\kwd{$k$-means}
\end{keyword}

\end{frontmatter}
\section{Introduction}\label{section:1}
The aim of cluster analysis 
is 
the discovery of a finite number of homogeneous classes from data.
In some cases, 
a cluster structure is considered to lie in a low-dimensional subspace of data,
and the following procedure is applied:
\begin{description}
\item[Step $1$.]
Principal component analysis (PCA) is performed, and the first few components are obtained. 
\item[Step $2$.] 
Conventional $k$-means clustering is performed for the principal scores on the first few principal components.
\end{description}
This two-step procedure is called ``tandem clustering" by Arabie \& Hubert (1994)
and 
has been discouraged by several authors (e.g., Arabie \& Hubert, 1994; Chang, 1983; De Soete \& Carroll, 1994).
Because the first few principal components of PCA do not
necessarily reflect the cluster structure in data, 
the appropriate clustering result may not be obtained by using the tandem clustering approach.
Figure $\ref{PCA:biplot}$ shows that 
the first two principal components do not reflect the cluster structure, and 
the clustering result of the tandem clustering is incorrect. 
\begin{figure}
\includegraphics[scale=0.35]{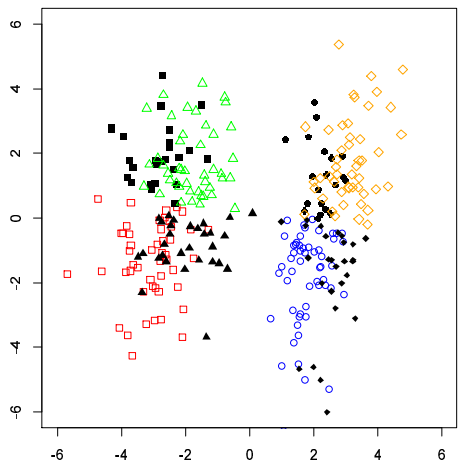}
\caption{First two dimensions of the principal component analysis and result of the tandem clustering (Black points represent the misclassification objects).}
\label{PCA:biplot}
\end{figure}
De Soete \& Carroll (1994) proposed reduced $k$-means (RKM) clustering.
RKM clustering simultaneously determines the clusters of objects on the basis of the $k$-means criterion 
and 
the subspace that is informative about the cluster structure in data on the basis of component analysis.
In other words, 
for given data points $\bm{x}_1,\;\dots,\;\bm{x}_n$ in $\mathbb{R}^{p}$, 
the fixed cluster number $k$ and the dimension number of subspace $q\;(q<\min\{k-1,\;p\})$, RKM clustering is defined by the minimization problem of the following loss function:
\begin{align}\label{eq:1.1}
RKM_n:=\frac{1}{n}\sum_{i=1}^{n}\min_{1\le j \le k} \|\bm{x}_i-A\bm{f}_j \|^2,
\end{align}
where $\bm{f}_j\in \mathbb{R}^{q}$ and $A$ is a $p\times q$ columnwise orthonormal matrix.
For some clustering methods related to $k$-means clustering, 
several authors have discussed their statistical properties 
(e.g., Abraham et al., 2003; Garc\'{i}a-Escudero et al., 1999; Pollard, 1981; Pollard, 1982; von Luxburg et al., 2008). 
However, 
because RKM clustering is proposed in the framework of descriptive statistics, 
the statistical properties are not discussed.
When data points are independently drawn from a population distribution $P$, 
the objective function is rewritten as 
$$
RKM(F,A,P_n):=\int\min_{\bm{f}\in F}\|\bm{x}-A\bm{f}\|P_n(d\bm{x}),
$$
where $F$ is a set containing $k$ or fewer points in $\mathbb{R}^{q}$, and $P_n$ is the empirical measure obtained from the data.
For each fixed $F$ and $A$, 
the strong law of large numbers (SLLN) shows that
$$
\lim_{n\rightarrow \infty}RKM(F,\;A,\;P_n)= RKM(F,\;A,\;P):=\int\min_{\bm{f}\in F}\|\bm{x}-A\bm{f}\|P(d\bm{x})\quad\mathrm{a.s.}
$$
Thus, we wish to ensure that the global minimizer of $RKM(\cdot,\cdot,\;P_n)$ converges almost surely to 
the global minimizers of $RKM(\cdot,\;\cdot,\;P)$, say the population global minimizers.

In this paper, 
the strong consistency of RKM under i.i.d. sampling is proven.
For this purpose, 
the framework of the proof of the strong consistency of the $k$-means clustering approach proposed by Pollard (1981) is used;
in this framework,  
the existence and uniqueness of the population global minimizers are assumed for consistency.
Conditions for the existence of the global minimizers are not discussed.
For RKM clustering, the uniqueness of the population global minimizers cannot be assumed
because RKM clustering has rotational indeterminacy.
Therefore,
the sufficient condition for the existence of the population global minimizers must be derived; 
it is also necessary to establish that the distance between the sample estimator and the {\it set} of global minimizers converges almost surely to zero, as the sample size approaches infinity.  

This paper is organized as follows.
In Section $\ref{section:2}$, 
the original algorithm of RKM clustering and visualization of the result are described.
Then, the relationship between the conventional $k$-means clustering method and RKM clustering is presented.
The notation and some properties of RKM, including the rotational indeterminacy, is introduced in Section $\ref{section:3}$. 
The uniform SLLN and continuity of the objective function of RKM clustering
are presented in Section $\ref{section:4}$.
In Section $\ref{section:5}$, 
conditions for the existence of the population global minimizers are determined, 
and  
a theorem regarding the strong consistency of RKM clustering is stated.
In Section $\ref{section:6}$, 
the main proof of the consistency theorem is explained.
In Section $\ref{section:7}$, 
a new criterion and its consistent estimator are proposed to determine the optimal dimension number of a subspace, given the number of clusters.
Moreover,
the effectiveness of the criterion through numerical experiments are illustrated. 
\section{Reduced $k$-means clustering}\label{section:2}
\subsection{Algorithm and visualization of reduced $k$-means clustering}

Let $X=(x_{ij})_{n\times p}$ be a data matrix and 
$\bm{x}_{i}\;(i=1,\;\dots,\;n)$ be row vectors of $X$,
where $n$ is the number of objects and $p$ is the number of variables. 
The number of clusters and components to which the variables are reduced are denoted by $k$ and $q$, respectively. 
RKM clustering is defined as the minimizing problem of the following criterion:
\begin{align}\label{eq:2.1}
RKM_n(A,\;F,\;U\mid k,\;q):=\|X-UFA^{T}\|_F^2 =\sum_{i=1}^{n}\min_{1\le j\le k}\|\bm{x}_i-A\bm{f}_j\|^2,
\end{align}
where $\|\cdot\|$ and $\|\cdot\|_F$ denote the usual Euclidean norm and Frobenius norm, respectively,
$U=(u_{ij})_{n\times k}$ is a binary membership matrix that
specifies cluster membership for each objects, 
$A=(a_{ij})_{p\times q}$ is a column-wise orthonormal loading matrix, 
$F=(f_{ij})_{k\times q}$ is a centroid matrix, 
and $\bm{f}_{j}$ is a centroid of the $j$th cluster for each $j=1,\;\dots,\;k$.
For example, this problem can be solved by the following alternating least square algorithm:
\begin{description}
\item[Step $0$.] First, initial values are chosen for $A,\;F,$ and $U$.
\item[Step $1$.] 
$Q\Sigma P^T$ is expressed as the singular value decomposition of $(UF)^TX$, 
where $Q$ is a $q\times q$ orthonormal matrix, 
$\Sigma$ is a $q\times q$ diagonal matrix, and $P$ is a $p \times q$ columnwise orthonormal matrix.
$A$ is updated by $PQ^T$.
\item[Step $2$.] 
For each $i=1,\;\dots,\;n$ and each $j=1,\;\dots,\;k$, 
we update $u_{ij}$ by 
\begin{align*}
u_{ij} = 
\begin{cases}
1 & \text{iff $\|A^T\bm{x}_i-\bm{f}_j\|^2 < \|A^T\bm{x}_i-\bm{f}_{j^{\prime}}\|^2$ for each $j^{\prime}\neq j$},\\
0 & \text{otherwise}.
\end{cases}
\end{align*}
\item[Step $3$.] $F$ is updated using $(U^{T}U)^{-1}U^{T}XA$.
\item[Step $4$.] 
Finally, the value of the function $RKM_n$ for the present values of $A,\;F$, and $U$ is computed.
When the present values have decreased the function value, 
$A,\;F$, and $U$ are update in accordance with Steps $1$--$3$.
Otherwise, the algorithm has converged.
\end{description}
Other formulations and algorithms for RKM clustering have been presented by De Soete \& Carrol (1994) and Timmerman et al. (2010).

The algorithms for RKM clustering monotonically decrease the function $RKM_n$.
As shown below, because $RKM_n$ is bounded, the solution for each iteration converges to a local minimum point.
Because of the binary constraint on $U$, 
the solutions of these algorithms may often be local minimums.  
To prevent this, many random starts are required to be used.

The objective function $RKM_n$ can be decomposed into two terms:
\begin{align}\label{eq:2.2}
RKM_n(A,\;F,\;U\mid k,\;q) = \|X-XAA^T\|_{F}^2 + \| XA- UF \|_{F}^2.
\end{align}
The first term of equation $(\ref{eq:2.2})$ is the objective function of the PCA, 
and 
the second term is the $k$-means criterion in a low dimensional subspace.
Thus, for optimal solutions $\hat{A},\;\hat{F}$, and $\hat{U}$, we have $\hat{F}=(\hat{U}^T\hat{U})^{-1}\hat{U}^T\hat{X}\hat{A}$. 
Using the optimal solutions $\hat{A}$, $\hat{F}$, and $\hat{U}$, 
the low-dimensional representation of the objects and cluster centers can be obtained:
\begin{eqnarray}
Y:=X\hat{A}\quad \text{ and } \quad G:=(\hat{U}^T\hat{U})^{-1}\hat{U}^TY.
\end{eqnarray}
Using $Y$ and $\hat{A}$,  
a biplot reflecting the cluster structure can be presented.
Figure $\ref{RKM:biplot}$ shows the biplot of the RKM clustering for the same data as that used in Figure $\ref{PCA:biplot}$.
\begin{figure}
\includegraphics[scale=0.35]{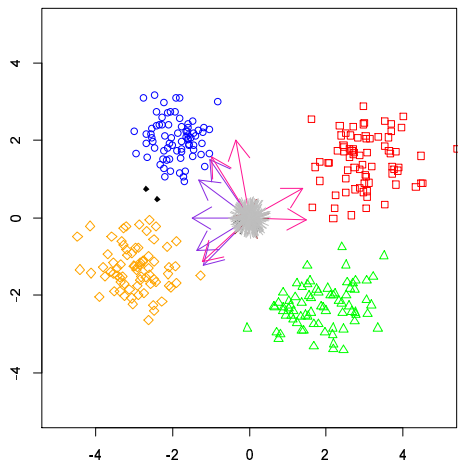}
\caption{Biplot of the result of RKM clustering for the same data set as Figure $1$ (Black points represent the misclassification objects).}
\label{RKM:biplot}
\end{figure}
\subsection{The relationship between the conventional $k$-means and the RKM clusterings}

The objective function of the conventional $k$-means clustering method is given by
\begin{align}\label{eq:2.3}
KM_n(C,\;U\mid k):=\|X-UC\|_F^2,
\end{align}
where $C$ is an $k\times p$ cluster center matrix.
$P\Sigma Q^{T}$ is expressed as the singular value decomposition of $C$, 
where $P$ is an $k\times k$ orthonormal matrix, 
$\Sigma$ is an $k\times k$ diagonal matrix, 
and $Q$ is a $p\times k$ column-wise orthonormal matrix.
Function $(\ref{eq:2.3})$ can be expressed as 
\begin{align*}
\|X-UC\|^2&=\|X -UP\Sigma Q^{T}\|_F^2.
\end{align*}
Considering $P\Sigma$ and $Q$ as a low-dimensional centroid matrix $F$ and a loading matrix $A$, respectively, 
function $(\ref{eq:2.3})$ is equivalent to the objective function of RKM, $RKM_n(A,\;F,\;U\mid k,\;k)$. 
Thus, RKM clustering includes the conventional $k$-means clustering analysis as a special case.

\section{Preliminaries}\label{section:3}
Let $(\Omega,\;\mathcal{F},\;P)$ be a probability space
and $\bm{X}_1,\;\dots,\;\mathbf{X}_n$ be independent random variables with a common population distribution $P$ on $\mathbb{R}^{p}$;
let $P_n$ be the empirical measure based on $\bm{X}_1,\;\dots,\;\bm{X}_n$.
For typographical convenience, 
the set of all $p\times q$ column-wise orthonormal matrices are denoted by $\mathcal{O}(p\times q)$, 
and $\mathcal{R}_k:=\{ R \subset \mathbb{R}^{q} \mid \#(R)\le k \}$, where $\#(R)$ is the cardinality of $R$.
Thus, the parameter space is denoted by $\Xi_k := \mathcal{R}_k \times \mathcal{O}(p\times q)$.
$B_q(r)$ denotes the $q$-dimensional closed ball of radius $r$ centered at the origin.
For each $M>0$, define $\mathcal{R}_k^{\ast}(M):= \{R \subset B_q(M) \mid \#(R) \le k\}$ 
and $\Theta_k^{\ast}(M):=\mathcal{R}_k^{\ast}(M) \times \mathcal{O}(p\times q)$.
Let $\phi:\mathbb{R}\rightarrow \mathbb{R}$ be a non-negative decreasing function
and $Q$ be a probability measure on $\mathbb{R}^{p}$.
For
each finite subset $F\subset \mathbb{R}^{q}$ and
each $A\in\mathcal{O}(p\times q)$,
the loss function of RKM with $Q$ is defined by
$$
\Phi(F,\;A,\;Q) := \int \min_{\bm{f}\in F}\phi(\|\bm{x}-A\bm{f}\|)Q(d\bm{x}).
$$
Write
$$
m_{k}(Q):=\inf_{(F,\;A)\in \Xi_k}\Phi(F,\;A,\;Q) 
\quad\text{and}\quad 
m_{k}^{\ast}(Q\mid M):=\inf_{(F,\;A)\in \Theta_k^{\ast}(M)}\Phi(F,\;A,\;Q).
$$
For $\theta=(F,\;A)\in \Xi_k$, 
both descriptions $\Phi(\theta,\;Q)$ and $\Phi(F,\;A,\;Q)$ are used.
In addition, 
$\Theta^{\prime}:= \{ \theta\in \Xi_k\mid m_k(P)=\Phi(\theta,\;P) \}$
and
$\Theta_n^{\prime}:= \{ \theta \in \Xi_k\mid m_k(P_n)=\Phi(\theta,\;P_n) \}$.
For each $M>0$, 
$\Theta^{\ast}:= \{ \theta\in \Theta_k^{\ast}(M)\mid m_k^{\ast}(P_n\mid M)=\Phi(\theta,\;P_n) \}$
and 
$\Theta_n^{\ast}:= \{ \theta \in \Theta_k^{\ast}(M)\mid m_k^\ast(P_n\mid M)=\Phi(\theta,\;P_n) \}$.
The parameters $\Theta^{\prime}(k)$ and $\Theta_n^{\prime}(k)$ are used to emphasize that
$\Theta^{\prime}$ and $\Theta_n^{\prime}$ are dependent on the index $k$. 
One of the measurable estimators in $\Theta_n^{\prime}$ will be denoted by $\hat{\theta}_n$ or $\hat{\theta}_n(k)$.
Similarly, 
we will also denote one of the measurable estimators in $\Theta_n^{\ast}$ 
by $\hat{\theta}_n^{\ast}$ or $\hat{\theta}_n^{\ast}(k)$.
To illustrate the existence of measurable estimators, see Section 6.7 of Pfanzagl (1996). 

Let $d_F(\cdot,\;\cdot)$ be the distance between two matrices based on Frobenius norm and $d_H(\cdot,\;\cdot)$ the Hausdorff distance, which is defined for finite subsets $A,\;B\subset \mathbb{R}^{q}$ as
$$
d_H(A,\;B):=\max_{\bm{a}\in A}\left\{\min_{\bm{b}\in B}\|\bm{a}-\bm{b}\| \right\}.
$$
Moreover, let $d$ be the product distance with $d_F$ and $d_H$.
In this paper, the distance between $\hat{\theta}_n$ and $\Theta^{\prime}$ is defined as 
$$
d(\hat{\theta}_n,\;\Theta^{\prime}):=\inf\{d(\hat{\theta}_n,\;\theta)\mid \theta \in \Theta^{\prime}\}.
$$

To clarify the minimization procedures, 
the function $\phi$ must satisfy some regularity conditions.
As proposed by Pollard (1981), 
it is assumed that $\phi$ is continuous, and $\phi(0)=0$.
Moreover, 
to control the growth of $\phi$,
it is assumed that
$$
\exists \lambda >0;\;\forall r >0;\;\phi(2r)\le \lambda\phi(r).
$$

For each $\bm{f}\in \mathbb{R}^{q}$ and each $A\in \mathcal{O}(p\times q)$,
\begin{align*}
\int\phi(\|\bm{x}-A\bm{f}\|)P(d\bm{x})
&\le 
\int\phi(\|\bm{x}\|+\|A\bm{f}\|)P(d\bm{x})
=
\int\phi(\|\bm{x}\|+\|\bm{f}\|)P(d\bm{x})\\
&=
\int_{\|\bm{f}\|>\|\bm{x}\|}\phi(2\|\bm{f}\|)P(d\bm{x}) + 
\int_{\|\bm{f}\|\le\|\bm{x}\|}\phi(2\|\bm{x}\|)P(d\bm{x})\\
&\le 
\phi(2\|\bm{f}\|) 
+ 
\lambda\int\phi(\|\bm{x}\|)P(d\bm{x}).
\end{align*}
Therefore,
as long as $\int \phi(\|\bm{x}\|)P(d\bm{x})$ is finite,
$\Phi(F,\;A,\;P)$ is also finite for each $F$ and each $A \in \mathcal{O}(p\times q)$.

Let $R$ be a $q \times q$ orthonormal matrix, i.e., $R^{T}R=RR^T=I_q$.
For each $\bm{f}\in \mathbb{R}^{q}$ and each $A \in \mathcal{O}(p\times q)$, 
\begin{align*}
\int\phi(\|\bm{x}-A\bm{f}\|)P(d\bm{x}) = \int\phi(\|\bm{x}-AR^{T}R\bm{f}\|)P(d\bm{x}).
\end{align*}
It follows that $\Theta^{\prime}$ is not a singleton when $\Theta^{\prime}\neq \emptyset$,
thus suggesting that RKM clustering has rotational indeterminacy.
\section{The uniform SLLN and the continuity of $\Phi(\cdot,\;\cdot,\;P)$}\label{section:4}
\begin{prop}\label{prop:USLLN}
Let $M>0$ be an arbitrary number. 
Let $\mathcal{G}$ denote the class of all $P$-integrable functions on $\mathbb{R}^p$ of the form 
$$
g_{(F,\;A)}(\bm{x}):=\min_{\bm{f}\in F}\phi(\|\bm{x}-A\bm{f}\|),
$$
where $(F,\;A)$ takes all values over $\Theta_{k}^{\ast}(M)$.
Suppose that $\int\phi(\|\bm{x}\|)P(d\bm{x})<\infty$.
Then, 
\begin{align}\label{eq:4.1}
\lim_{n\rightarrow \infty}\sup_{g\in \mathcal{G}}\left| \int g(\bm{x})\,P_n(d\bm{x}) -\int g(\bm{x})\,P(d\bm{x}) \right| =0 \quad \mathrm{a.s.} 
\end{align}
\end{prop}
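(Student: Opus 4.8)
The plan is to reduce $(\ref{eq:4.1})$ to a uniform strong law of large numbers over an ordinary compact Euclidean parameter set, after which the classical compactness‑plus‑envelope argument (in the spirit of Pollard, 1981) applies. First I would replace the space of finite subsets by honest points: let $\mathcal{K}:=B_q(M)^{k}\times\mathcal{O}(p\times q)$, which is compact because $B_q(M)^{k}$ is a closed bounded subset of $\mathbb{R}^{qk}$ and $\mathcal{O}(p\times q)$ is a closed bounded subset of $\mathbb{R}^{pq}$. For $\bm{y}=(\bm{y}_1,\dots,\bm{y}_k)\in B_q(M)^{k}$ and $A\in\mathcal{O}(p\times q)$, set $\tilde{g}_{(\bm{y},A)}(\bm{x}):=\min_{1\le j\le k}\phi(\|\bm{x}-A\bm{y}_j\|)$. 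Every $g=g_{(F,A)}\in\mathcal{G}$ coincides with $\tilde{g}_{(\bm{y},A)}$ for a suitable $\bm{y}$ (list the elements of $F$, repeating one of them to fill the $k$ coordinates), so $\sup_{g\in\mathcal{G}}\bigl|\int g\,dP_n-\int g\,dP\bigr|=\sup_{(\bm{y},A)\in\mathcal{K}}\bigl|\int \tilde{g}_{(\bm{y},A)}\,dP_n-\int \tilde{g}_{(\bm{y},A)}\,dP\bigr|$, and it suffices to prove the uniform SLLN for the family $\{\tilde{g}_{(\bm{y},A)}\}_{(\bm{y},A)\in\mathcal{K}}$. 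An integrable envelope is immediate: $\tilde{g}_{(\bm{y},A)}(\bm{x})\le\phi(\|\bm{x}-A\bm{y}_1\|)\le\phi(\|\bm{x}\|+M)$, and arguing exactly as in Section $\ref{section:3}$ (with $\|\bm{f}\|=M$) gives $\int\phi(\|\bm{x}\|+M)\,P(d\bm{x})\le\phi(2M)+\lambda\int\phi(\|\bm{x}\|)\,P(d\bm{x})<\infty$, so $G(\bm{x}):=\phi(\|\bm{x}\|+M)$ dominates every member of the family.

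Next I would exploit continuity. For each fixed $\bm{x}$ the map $(\bm{y},A)\mapsto\tilde{g}_{(\bm{y},A)}(\bm{x})$ is continuous on $\mathcal{K}$, being a minimum of the finitely many continuous maps $(\bm{y}_j,A)\mapsto\phi(\|\bm{x}-A\bm{y}_j\|)$ (continuity of $\phi$ and of $(\bm{y}_j,A)\mapsto\|\bm{x}-A\bm{y}_j\|$). For $\delta>0$ and $\zeta\in\mathcal{K}$ I would introduce the oscillation
$$
\Delta_\delta(\zeta,\bm{x}):=\sup\bigl\{\,\bigl|\tilde{g}_{\zeta'}(\bm{x})-\tilde{g}_{\zeta}(\bm{x})\bigr|:\zeta'\in\mathcal{K},\ \|\zeta'-\zeta\|<\delta\,\bigr\}.
$$
Since $\mathcal{K}$ is compact metric, hence separable, this supremum may be taken over a countable dense subset of the relevant ball, so $\Delta_\delta(\zeta,\cdot)$ is measurable; moreover $0\le\Delta_\delta(\zeta,\bm{x})\le 2G(\bm{x})$ and, by pointwise continuity, $\Delta_\delta(\zeta,\bm{x})\downarrow 0$ as $\delta\downarrow 0$. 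Dominated convergence then yields $\int\Delta_\delta(\zeta,\bm{x})\,P(d\bm{x})\to 0$ as $\delta\downarrow 0$, for every $\zeta\in\mathcal{K}$.

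Finally I would close the argument with a finite‑cover step. Fix $\varepsilon>0$; for each $\zeta\in\mathcal{K}$ choose $\delta_\zeta>0$ with $\int\Delta_{\delta_\zeta}(\zeta,\bm{x})\,P(d\bm{x})<\varepsilon$. The open balls $\{\|\cdot-\zeta\|<\delta_\zeta\}$ cover the compact set $\mathcal{K}$, so finitely many centres $\zeta_1,\dots,\zeta_N$ suffice. Applying the ordinary SLLN to the finite collection of $P$‑integrable functions $\tilde{g}_{\zeta_1},\dots,\tilde{g}_{\zeta_N}$ and $\Delta_{\delta_{\zeta_1}}(\zeta_1,\cdot),\dots,\Delta_{\delta_{\zeta_N}}(\zeta_N,\cdot)$, off a single $P$‑null set, for all $n$ large each of these integrals against $P_n$ differs from its integral against $P$ by at most $\varepsilon$. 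Any $\zeta\in\mathcal{K}$ lies in some ball $\{\|\cdot-\zeta_i\|<\delta_{\zeta_i}\}$, so $\bigl|\tilde{g}_{\zeta}(\bm{x})-\tilde{g}_{\zeta_i}(\bm{x})\bigr|\le\Delta_{\delta_{\zeta_i}}(\zeta_i,\bm{x})$ pointwise; chaining these bounds with the SLLN estimates gives $\bigl|\int\tilde{g}_{\zeta}\,dP_n-\int\tilde{g}_{\zeta}\,dP\bigr|\le 4\varepsilon$ uniformly in $\zeta\in\mathcal{K}$ for all $n$ large. Taking $\varepsilon=1/m$, $m\in\mathbb{N}$, and intersecting the countably many probability‑one events proves $(\ref{eq:4.1})$.

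The one step that really has to be set up correctly is the initial reduction from $\mathcal{R}_k^{\ast}(M)$ to the compact Euclidean set $\mathcal{K}$: it makes joint continuity of $(\bm{y},A,\bm{x})\mapsto\tilde{g}_{(\bm{y},A)}(\bm{x})$ transparent and avoids having to argue continuity of $(F,A)\mapsto g_{(F,A)}(\bm{x})$ directly on the space of finite subsets under $d_H$, where the number of distinct centroids can drop in a limit. After that, the only point needing a little care is the measurability of $\Delta_\delta$, which is handled by separability; the remaining finite‑cover and SLLN bookkeeping is entirely classical.
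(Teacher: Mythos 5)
Your proof is correct, but it reaches the conclusion by a genuinely different (though closely related) route from the paper. The paper verifies DeHardt's bracketing condition directly: it builds an explicit finite class $\mathcal{G}_\epsilon$ from $\delta_1$-nets of $B_q(M)$ and $\delta_2$-nets of the matrix sphere, brackets each $g_{(F,A)}$ between $\min_{\bm{f}\in F'}\phi(\|\bm{x}-A'\bm{f}\|\pm(\sqrt{q}\delta_1+M\delta_2))$, and controls the bracket width by splitting $\mathbb{R}^p$ into a ball of radius $R$ (where uniform continuity of $\phi$ on compacts applies) and a tail (controlled by $\phi(2r)\le\lambda\phi(r)$ and integrability of $\phi(\|\bm{x}\|)$). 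You instead run the Wald/Le Cam compactness--oscillation argument: reparametrize over the compact set $B_q(M)^k\times\mathcal{O}(p\times q)$, exhibit the integrable envelope $\phi(\|\bm{x}\|+M)$, show the oscillation $\Delta_\delta(\zeta,\cdot)$ is measurable (separability), dominated, and shrinks to zero pointwise, invoke dominated convergence, take a finite subcover, and apply the ordinary SLLN to finitely many functions. The two arguments are morally equivalent -- your brackets are implicitly $\tilde{g}_{\zeta_i}\pm\Delta_{\delta_{\zeta_i}}(\zeta_i,\cdot)$ -- but yours delegates all the quantitative work (the choice of $R$, the $\sqrt{q}\delta_1+M\delta_2$ bookkeeping, the uniform continuity of $\phi$) to dominated convergence, which makes it shorter and less error-prone; the price is the extra care you correctly take over measurability of the oscillation. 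Your explicit reduction from $\mathcal{R}_k^{\ast}(M)$ to $k$-tuples with repetition is a genuine improvement in transparency: it sidesteps the continuity-in-Hausdorff-distance issue when the number of distinct centroids drops in a limit, which the paper glosses over. Both arguments rely only on the ergodic theorem for finitely many functions, so both extend to stationary ergodic sampling as claimed in the paper's conclusion.
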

\begin{proof}
DeHardt (1971) provided the sufficient condition for the uniform SLLN $(\ref{eq:4.1})$;
for all $\epsilon>0$, there exists a finite class of functions $\mathcal{G}_{\epsilon}$ 
such that 
for each $g\in \mathcal{G}$, $\dot{g}$ and $\bar{g}$ exist in $\mathcal{G}_\epsilon$ with $\dot{g} \le g \le \bar{g}$
and
$\int \bar{g}(\bm{x})\,P(d\bm{x})-\int \dot{g}(\bm{x})\,P(d\bm{x}) <\epsilon$.

An arbitrary $\epsilon>0$ is selected, 
and
$S_{p\times q}(\sqrt{q})$ denotes the surface of the sphere on $\mathbb{R}^{p\times q}$ of radius $\sqrt{q}$ centered at the origin.
To find such a finite class $\mathcal{G}_{\epsilon}$, 
$D_{\delta_1}$ is defined as the finite set of $\mathbb{R}^{q}$ satisfying
$$
\forall \bm{f} \in B_q(M);\;\exists \bm{g}\in D_{\delta_1};\;\|\bm{f}-\bm{g}\|<\delta_1
$$
and
$\mathcal{A}_{p\times q,\;\delta_2}$ as the finite sets of $S_{p\times q}(\sqrt{q})$ satisfying
$$
\forall A \in S_{p\times q}(\sqrt{q});\; \exists B \in \mathcal{A}_{p\times q,\;\delta_2};\;\| A- B \|_F <\delta_2.
$$
Define $\mathcal{R}_{k,\;\delta_1}:=\{F \in \mathcal{R}_{k}^\ast(M)\mid F \subset D_{\delta_1}\}$.
Take $\mathcal{G}_\epsilon$ as the finite class of functions of the form
$$
\min_{\bm{f}\in F^{\prime}}\phi(\|\bm{x}-A^{\prime}\bm{f}\|+\sqrt{q}\delta_1+M\delta_2) \quad \text{or} \quad 
\min_{\bm{f}\in F^{\prime}}\phi(\|\bm{x}-A^{\prime}\bm{f}\|-\sqrt{q}\delta_1-M\delta_2),
$$
where $(F^{\prime},\;A^{\prime})$ takes all values over $\mathcal{R}_{k,\;\delta_1}\times \mathcal{A}_{p\times q,\;\delta_2}$ 
and $\phi(r)$ is defined as zero for all negative $r<0$.

For given $F=\{\bm{f}_1,\;\dots,\;\bm{f}_k\}\in \mathcal{R}_{k}^\ast(M)$ 
and $A\in \mathcal{O}(p\times q)$,
there exists $F^{\prime}=\{\bm{f}_1^{\prime},\;\dots,\;\bm{f}_k^{\prime}\}\in \mathcal{R}_{k,\;\delta_1}$ 
with $\|\bm{f}_i-\bm{f}_i^{\prime}\|<\delta_1$ for each $i$ 
and each $A^{\prime}\in \mathcal{A}_{p\times q,\;\delta_2}$ with
$\|A-A^{\prime}\|_F<\delta_2$.
Corresponding to each $g_{(F,\;A)}\in \mathcal{G}$,
choose
$$
\bar{g}_{(F,\;A)}:= 
\min_{\bm{f}\in F^{\prime}}\phi(\|\bm{x}-A^{\prime}\bm{f}\|+\sqrt{q}\delta_1+M\delta_2)
$$
and
$$
\dot{g}_{(F,\;A)}:= 
\min_{\bm{f}\in F^{\prime}}\phi(\|\bm{x}-A^{\prime}\bm{f}\|-\sqrt{q}\delta_1-M\delta_2).
$$
Because $\phi$ is a monotone function and
\begin{align*}
\|\bm{x}-A^{\prime}\bm{f}_{i}^{\prime}\|-\sqrt{q}\delta_1-M\delta_2
\le
\|\bm{x}-A\bm{f}_{i}\|\le \|\bm{x}-A^{\prime}\bm{f}_{i}^{\prime}\|+\sqrt{q}\delta_1+M\delta_2
\end{align*}
for each $i$ and each $\bm{x}\in \mathbb{R}^{p}$,
these functions ensure that $\dot{g}_{(F,\;A)} \le g_{(F,\;A)} \le \bar{g}_{(F,\;A)}$.

If we choose $R>0$ to be greater than $\sqrt{q}\delta_1+M\delta_2+M\sqrt{q}$,
\begin{align*}
&\int \left[ \bar{g}_{(F,\;A)}(\bm{x}) - \dot{g}_{(F,\;A)}(\bm{x})\right] P(d\bm{x})\\
\le
&\int \sum_{i=1}^{k}\biggl[
\phi(\|\bm{x}-A^{\prime}\bm{f}_{i}^{\prime}\|+\sqrt{q}\delta_1+M\delta_2)\\
&\qquad\qquad-
\phi(\|\bm{x}-A^{\prime}\bm{f}_{i}^{\prime}\|-\sqrt{q}\delta_1-M\delta_2) 
\biggr]P(d\bm{x})\\
\le
&k\sup_{\|\bm{x}\|\le R}\sup_{\bm{f}\in B(5M)}\sup_{A\in S_{p\times q}(\sqrt{q})}
\biggl[
\phi(\|\bm{x}-A\bm{f}\|+\sqrt{q}\delta_1+M\delta_2)\\
&\qquad\qquad-
\phi(\|\bm{x}-A\bm{f}\|-\sqrt{q}\delta_1-M\delta_2)
\biggr]
+ 2k\lambda \int_{\|\bm{x}\|\ge R} \phi(\|\bm{x}\|)P(d\bm{x}).
\end{align*}
The second term would be less than $\epsilon/2$ if $R$ is sufficiently large.
Moreover, 
because $\phi$ is uniform continuous on a bounded set, 
the first term can be less than $\epsilon/2$ if $\delta_1,\;\delta_2>0$ is sufficiently small.
Thus, the uniform SLLN is proven.
\end{proof}
Similarly, 
the continuity of $\Phi(\cdot,\;P)$ on $\Theta_k^{\ast}(M)$ can be proven.
\begin{prop}\label{prop:C}
Let $M>0$ be an arbitrary number. 
Suppose that $\int\phi(\|\bm{x}\|)P(d\bm{x})$.
Then, 
$\Phi(\cdot,\;P)$ is continuous on $\Theta_k^{\ast}(M)$.
\end{prop}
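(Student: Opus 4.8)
The plan is to prove continuity at an arbitrary point $(F_0,A_0)\in\Theta_k^{\ast}(M)$ by an $\epsilon$--$\delta$ argument that parallels the net construction in the proof of Proposition~\ref{prop:USLLN}, but with $\Phi(\cdot,\cdot,P_n)$ replaced by $\Phi(\cdot,\cdot,P)$. Fix $\epsilon>0$. Since
\begin{equation*}
\bigl|\Phi(F,A,P)-\Phi(F_0,A_0,P)\bigr|\le\int\bigl|g_{(F,A)}(\bm{x})-g_{(F_0,A_0)}(\bm{x})\bigr|\,P(d\bm{x}),
\end{equation*}
it suffices to make the right-hand side smaller than $\epsilon$ whenever $d\bigl((F,A),(F_0,A_0)\bigr)$ is small, and for this the integral is split over $\{\|\bm{x}\|\le R\}$ and $\{\|\bm{x}\|>R\}$ for a suitably large $R$.

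For the pointwise comparison, suppose the Hausdorff distance between $F$ and $F_0$ (in both directions) is at most $\delta_1$ and $\|A-A_0\|_F<\delta_2$. Given $\bm{x}$, let $\bm{f}_0^{\ast}\in F_0$ attain $g_{(F_0,A_0)}(\bm{x})$ and pick $\bm{f}^{\ast}\in F$ with $\|\bm{f}^{\ast}-\bm{f}_0^{\ast}\|<\delta_1$; since $\|\bm{f}^{\ast}\|\le M$ and $A,A_0$ are columnwise orthonormal, $\bigl|\,\|\bm{x}-A\bm{f}^{\ast}\|-\|\bm{x}-A_0\bm{f}_0^{\ast}\|\,\bigr|\le\|A\bm{f}^{\ast}-A_0\bm{f}_0^{\ast}\|\le M\delta_2+\delta_1$. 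Using the symmetric choice (interchanging the roles of $F$ and $F_0$) together with the monotonicity of $\phi$, this yields
\begin{equation*}
\bigl|g_{(F,A)}(\bm{x})-g_{(F_0,A_0)}(\bm{x})\bigr|\le\sup_{0\le t\le\|\bm{x}\|+M}\bigl[\phi(t+M\delta_2+\delta_1)-\phi(t)\bigr].
\end{equation*}
On $\{\|\bm{x}\|\le R\}$ the supremum is bounded by the modulus of continuity of $\phi$ on the compact interval $[0,R+2M]$ evaluated at $M\delta_2+\delta_1$, which can be made $<\epsilon/2$ uniformly over that region by taking $\delta_1,\delta_2$ small; hence $\int_{\|\bm{x}\|\le R}\bigl|g_{(F,A)}-g_{(F_0,A_0)}\bigr|\,dP<\epsilon/2$.

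It remains to control the tail uniformly over all competitors. Exactly as in the displayed estimate of Section~\ref{section:3}, for every $(F,A)\in\Theta_k^{\ast}(M)$ and every $\bm{x}$, picking any $\bm{f}\in F$ (so $\|\bm{f}\|\le M$) gives $g_{(F,A)}(\bm{x})\le\phi(\|\bm{x}\|+M)\le\lambda\phi(\|\bm{x}\|)+\phi(2M)$, and the right-hand side is $P$-integrable by hypothesis. Therefore $\int_{\|\bm{x}\|>R}\bigl|g_{(F,A)}-g_{(F_0,A_0)}\bigr|\,dP\le2\int_{\|\bm{x}\|>R}\bigl(\lambda\phi(\|\bm{x}\|)+\phi(2M)\bigr)\,P(d\bm{x})$, which tends to $0$ as $R\to\infty$ since $\int\phi(\|\bm{x}\|)\,P(d\bm{x})<\infty$ and $P(\|\bm{x}\|>R)\to0$; fixing $R$ so that this is $<\epsilon/2$ and then choosing $\delta_1,\delta_2$ as above gives $\bigl|\Phi(F,A,P)-\Phi(F_0,A_0,P)\bigr|<\epsilon$ on the corresponding $d$-neighbourhood. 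The step that needs genuine care is this tail bound: the envelope $\lambda\phi(\|\bm{x}\|)+\phi(2M)$ must dominate $g_{(F,A)}$ simultaneously for all $(F,A)\in\Theta_k^{\ast}(M)$ — which is precisely where restricting the centroids to $B_q(M)$ is used — so that a single $R$ works for the whole neighbourhood; the $\min$-over-$F$ bookkeeping and the pairing of centroids through the Hausdorff distance are otherwise routine.
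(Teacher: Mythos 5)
Your proof is correct and follows essentially the same route as the paper's: pair the centroids through the Hausdorff distance and the loading matrices through the Frobenius distance to get the $M\delta_2+\delta_1$ shift in the argument of $\phi$, then split the integral into $\{\|\bm{x}\|\le R\}$ (controlled by uniform continuity of $\phi$ on a compact interval) and the tail (controlled by the integrable envelope coming from $\phi(2r)\le\lambda\phi(r)$ and $\int\phi(\|\bm{x}\|)\,P(d\bm{x})<\infty$). The only cosmetic difference is that the paper bounds the integrand by a sum over the $k$ centroids (picking up a factor $k$) rather than by a modulus-of-continuity supremum; both yield the same uniform estimate on $\Theta_k^{\ast}(M)$.
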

\begin{proof}
If $(F,\;A),\;(G,\;B)\in \Theta_k^{\ast}$ are select such that $d_{H}(F,\;G)<\delta_1$ and $\|A-B\|_F<\delta_2$, 
then for each $\bm{g}\in G$, 
there exists $\bm{g}(\bm{f})\in F$ with $\|\bm{g}-\bm{g}(\bm{f})\|<\delta_1$, 
and furthermore,
\begin{align}\label{eq:4.2}
&\Phi(F,\;\bm{A},\;P)-\Phi(G,\;\bm{B},\;P) \nonumber \\
&=
\int\left[\min_{\bm{f}\in F}\phi(\|\bm{x}-\bm{Af}\|)-\min_{\bm{g}\in G}\phi(\|\bm{x}-\bm{Bg}\|)\right]P(d\bm{x})\nonumber \\
&\le 
\int \max_{\bm{g} \in G}[\phi(\|\bm{x}-\bm{Af}(\bm{g})\|)-\phi(\|\bm{x}-\bm{Bg}\|)]P(d\bm{x})\nonumber \\
&\le
\int \sum_{\bm{g} \in G}[\phi(\|\bm{x}-\bm{Bg}\|+M\delta_2+\delta_1)
-\phi(\|\bm{x}-\bm{Bg}\|)]P(d\bm{x})
\nonumber \\
&\le
k\sup_{\|\bm{x}\|\le R}\max_{\bm{g}\in G}[\phi(\|\bm{x}-\bm{Bg}\|+M\delta_2+\delta_1)
-\phi(\|\bm{x}-\bm{Bg}\|)]\nonumber \\
&\quad+
2k\lambda\int_{\|\bm{x}\|\ge R} \phi(\|\bm{x}\|)P(d\bm{x})
\end{align}
for $R>\delta_1+M(1+\delta_2)$.
When a sufficiently large $R$ and a sufficiently small $\delta_1,\;\delta_2>0$ are selected,
the last bound is less than $\epsilon$.
For each $\bm{f}\in F$, there also exists $\bm{f}(\bm{g})\in G$ with $\|\bm{f}-\bm{f}(\bm{g})\|<\delta_1$.
Therefore, 
the other inequality necessary for the continuity is obtained by interchanging 
$(F,\;A)$ and $(G,\;B)$ in the inequality $(\ref{eq:4.2})$.
\end{proof}
\section{The consistency theorem}\label{section:5}
\subsection{The existence of the population global optimizers}
The aim of this paper is to prove that, for a fixed measure $P$ satisfying some natural assumptions, 
the infimum distance between the (measurable) estimator $\hat{\theta}_n$ with $\Phi(\hat{\theta}_n)=m_k(P_n)$ 
and parameters achieving $m_k(P)$ converges almost surely to $0$, as the sample size goes to infinity. 
However, there may be no such parameters.
Thus, before providing the consistency theorem, 
the sufficient condition for the existence of parameters achieving $m_k(P)$ in $\Xi_k$ is provided.
The following proposition ensures the existence of such parameters.
The proof and some details about the proposition are given in Appendix \ref{app}.
\begin{prop}\label{prop:A}
Suppose that $\int\phi(\|\bm{x}\|)P(d\bm{x})<\infty$ and that $m_j(P) > m_{k}(P)$
for $j=1,\;2,\;\dots,\;k-1$.
Then, $\Theta^{\prime}\neq \emptyset$.
\end{prop}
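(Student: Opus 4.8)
The plan is to apply the direct method of the calculus of variations. Let $(F^{(m)},A^{(m)})_{m\ge1}\subset\Xi_k$ be a minimizing sequence, i.e.\ $\Phi(F^{(m)},A^{(m)},P)\to m_k(P)$; here it is harmless to regard each $F^{(m)}=\{\bm{f}^{(m)}_1,\dots,\bm{f}^{(m)}_k\}$ as a $k$-tuple (with possible repetitions, which alters neither $\min_{\bm{f}\in F}\phi(\|\bm{x}-A\bm{f}\|)$ nor the set generated), so that $m_k(P)$ is also the infimum over $k$-tuples. Since $\mathcal{O}(p\times q)$ is compact, pass to a subsequence along which $A^{(m)}\to A^{\ast}\in\mathcal{O}(p\times q)$. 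Then, applying Bolzano--Weierstrass to each coordinate sequence $(\bm{f}^{(m)}_i)_m$ and extracting a further subsequence, split $\{1,\dots,k\}$ into a set $I$ on which $\bm{f}^{(m)}_i\to\bm{f}^{\ast}_i\in\mathbb{R}^q$ and its complement $I^{c}$ on which $\|\bm{f}^{(m)}_i\|\to\infty$. Put $\ell:=\#(I)$ and $F^{\ast}:=\{\bm{f}^{\ast}_i:i\in I\}$, a set of at most $\ell$ points.

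The heart of the argument is the pointwise identification of $\liminf_m g_{(F^{(m)},A^{(m)})}$. For $i\in I$, continuity of $\phi$ and the convergences $\bm{f}^{(m)}_i\to\bm{f}^{\ast}_i$, $A^{(m)}\to A^{\ast}$ give $\phi(\|\bm{x}-A^{(m)}\bm{f}^{(m)}_i\|)\to\phi(\|\bm{x}-A^{\ast}\bm{f}^{\ast}_i\|)$. For $i\in I^{c}$, orthonormality of the columns of $A^{(m)}$ gives $\|A^{(m)}\bm{f}^{(m)}_i\|=\|\bm{f}^{(m)}_i\|\to\infty$, hence $\|\bm{x}-A^{(m)}\bm{f}^{(m)}_i\|\to\infty$ and, $\phi$ being non-decreasing and continuous, $\phi(\|\bm{x}-A^{(m)}\bm{f}^{(m)}_i\|)\to s:=\lim_{r\to\infty}\phi(r)=\sup_{r\ge0}\phi(r)$, a quantity dominating $\phi$ at every bounded argument. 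Taking the minimum over $i$ therefore gives $\lim_m g_{(F^{(m)},A^{(m)})}(\bm{x})=g_{(F^{\ast},A^{\ast})}(\bm{x})$ when $I\neq\emptyset$, and $=s$ when $I=\emptyset$. As the integrands are non-negative (and finite quantities are in play because $\int\phi(\|\bm{x}\|)\,P(d\bm{x})<\infty$, cf.\ Section \ref{section:3}), Fatou's lemma yields
$$
m_k(P)=\lim_m\Phi(F^{(m)},A^{(m)},P)\ \ge\ \int\liminf_m g_{(F^{(m)},A^{(m)})}(\bm{x})\,P(d\bm{x})\ =\ \Phi(F^{\ast},A^{\ast},P),
$$
the right-hand side being read as $\int s\,P(d\bm{x})=s$ in the case $\ell=0$.

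To finish, one shows $\ell=k$. If $1\le\ell\le k-1$, then $(F^{\ast},A^{\ast})\in\Xi_{\ell}$, so $\Phi(F^{\ast},A^{\ast},P)\ge m_{\ell}(P)\ge\min_{1\le j\le k-1}m_j(P)>m_k(P)$, contradicting the display. If $\ell=0$, then the display gives $m_k(P)\ge s\ge m_1(P)>m_k(P)$ (using $s\ge\Phi(F,A,P)\ge m_1(P)$ for any one-point $F$, and $k\ge2$), again a contradiction. Hence $\ell=k$: no centre escapes, $F^{\ast}=\{\bm{f}^{\ast}_1,\dots,\bm{f}^{\ast}_k\}\in\mathcal{R}_k$, and the display becomes $m_k(P)\ge\Phi(F^{\ast},A^{\ast},P)$; combined with the trivial reverse inequality $\Phi(F^{\ast},A^{\ast},P)\ge m_k(P)$, this gives $\Phi(F^{\ast},A^{\ast},P)=m_k(P)$, so $(F^{\ast},A^{\ast})\in\Theta^{\prime}$ and $\Theta^{\prime}\neq\emptyset$.

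The step I expect to be the main obstacle is controlling the centres that run off to infinity: one has to be certain that such a centre cannot be the active (nearest) one in the limit, which is exactly where monotonicity of $\phi$ and orthonormality of $A$ (so that ``a centre at infinity'' is meaningful, $\|A\bm{f}\|=\|\bm{f}\|$) are both used, and then to convert this fact, via Fatou, into a clash with the strict inequalities $m_j(P)>m_k(P)$. The degenerate case $\ell=0$ and the possibility of a bounded $\phi$ (so $s<\infty$) require the short separate checks indicated above; the remaining ingredients --- compactness of $\mathcal{O}(p\times q)$ and routine convergence arguments --- are standard.
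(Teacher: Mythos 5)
Your proposal is correct, but it takes a genuinely different route from the paper's. The paper proceeds by a priori localization: Lemmas \ref{lemma:A} and \ref{lemma:B} use the gap condition together with the tail bound $\lambda\int_{\|\bm{x}\|\ge 2M}\phi(\|\bm{x}\|)P(d\bm{x})<\epsilon$ to show that any $F$ with a centre outside $B_q(5M)$ is strictly worse than the infimum over the compact set $\Theta_k^{\ast}(5M)$, and existence then follows from compactness of $\Theta_k^{\ast}(5M)$ plus the continuity of $\Phi(\cdot,\;P)$ from Proposition \ref{prop:C}. You instead run the direct method on the full non-compact space, let some centres escape to infinity along a minimizing sequence, and use Fatou's lemma (lower semicontinuity in place of continuity) to show that the limit with the escaped centres discarded still attains a value at most $m_k(P)$, which contradicts $m_j(P)>m_k(P)$ unless no centre escapes. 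Your argument is shorter and dispenses with both the $M$/$2M$/$5M$ bookkeeping and the continuity proposition; what it does not deliver is the quantitative conclusion that \emph{every} element of $\Theta^{\prime}$ lies in $\Theta_k^{\ast}(5M)$, which the paper reuses later (in Lemma \ref{lemma:2} and in Corollary \ref{col:A}), so those lemmas would still have to be proved separately for the consistency theorem. Two minor points: your $\ell=0$ contradiction assumes $k\ge 2$, and for $k=1$ with bounded $\phi$ the degenerate case $s=m_1(P)$ would need the (easy) separate remark that then every parameter is a minimizer; and your argument reads $\phi$ as non-decreasing, which is clearly what the paper intends despite the word ``decreasing'' in Section \ref{section:3}.
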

From Lemma $\ref{lemma:B}$ in Appendix \ref{app}, 
there exists $M>0$ such that 
$F \subset B_{q}(5M)$  for all $(F,\;A)\in \Theta^{\prime}$.
Moreover, 
under the assumption of Proposition $\ref{prop:A}$, 
the following identification condition can be proven:
$$
\inf_{\theta\in \Theta_k^{\ast}(5M):d(\theta,\;\Theta^{\prime})\ge \epsilon}\Phi(\theta,\;P)
>
\inf_{\theta\in \Theta^{\prime}}\Phi(\theta,\;P)
\quad 
\text{for all $\epsilon >0$.}
$$
The proof of the identification condition is also given in Appendix \ref{app}.
The identification condition is used in Section $\ref{section:6}$.
\subsection{Strong consistency of reduced $k$-means clusterings}
If the parameter space is $\Theta_k^{\ast}(M)$, 
the strong consistency of RKM clustering can be proven.
Note that since $\Theta_k^{\ast}(M)$ is compact, 
we have $\Theta^{\ast}\neq \emptyset$ and the identification condition:
$$
\inf_{\theta \in \Theta_{\epsilon}^{\ast}(M)}\Phi(\theta,\;P) 
> \inf_{\theta\in \Theta^{\ast}}\Phi(\theta,\;P) 
\quad 
\text{for all }
\epsilon >0,
$$
where $\Theta_{\epsilon}^{\ast}(M):=\{\theta \in \Theta_k^{\ast}(M) \mid d(\theta,\;\Theta^{\ast})\ge \epsilon\}$.
\begin{prop}\label{prop:SC}
Suppose that $\int \phi(\|\bm{x}\|)P(d\bm{x})<\infty$. 
Then, for each $M>0$,
$$
\lim_{n\rightarrow \infty}d(\hat{\theta}^{\ast}_n,\;\Theta^{\ast})=0\quad
\mathrm{a.s.}
,\;\text{and }
\lim_{n\rightarrow \infty} m_k^{\ast}(P_n\mid M)=m_k^{\ast}(P\mid M)\quad
\mathrm{a.s.}
$$
\end{prop}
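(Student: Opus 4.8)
The plan is to run the standard consistency argument of Pollard (1981), which is now available because Propositions \ref{prop:USLLN} and \ref{prop:C} hold and $\Theta_k^\ast(M)$ is compact. First I would fix one almost sure event carrying everything: by Proposition \ref{prop:USLLN}, on an event $\Omega_0$ with $P(\Omega_0)=1$,
$$
\Delta_n:=\sup_{\theta\in\Theta_k^\ast(M)}\bigl|\Phi(\theta,P_n)-\Phi(\theta,P)\bigr|\longrightarrow 0 .
$$
The second assertion is then immediate from the elementary bound $|\inf f-\inf g|\le\sup|f-g|$: on $\Omega_0$,
$$
\bigl|m_k^\ast(P_n\mid M)-m_k^\ast(P\mid M)\bigr|
=\Bigl|\inf_{\theta\in\Theta_k^\ast(M)}\Phi(\theta,P_n)-\inf_{\theta\in\Theta_k^\ast(M)}\Phi(\theta,P)\Bigr|
\le\Delta_n\longrightarrow0 .
$$
(Compactness of $\Theta_k^\ast(M)$ with the continuity of $\Phi(\cdot,P)$ and of $\Phi(\cdot,P_n)$ --- the latter by applying Proposition \ref{prop:C} to the finitely supported measure $P_n$ --- also guarantees $\Theta^\ast\neq\emptyset$ and $\Theta_n^\ast\neq\emptyset$, so that $\hat\theta_n^\ast$ is well defined.)

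For the first assertion I would argue by contradiction through the identification condition. Fix $\epsilon>0$; if $\Theta_\epsilon^\ast(M)=\emptyset$ there is nothing to prove, so assume it is nonempty, hence compact as a closed subset of $\Theta_k^\ast(M)$, and put
$$
\eta:=\inf_{\theta\in\Theta_\epsilon^\ast(M)}\Phi(\theta,P)-m_k^\ast(P\mid M)>0 ,
$$
the positivity being exactly the identification condition stated above. On $\Omega_0$ choose $N$ with $\Delta_n<\eta/2$ for all $n\ge N$. Then for $n\ge N$, using $\Phi(\hat\theta_n^\ast,P_n)=m_k^\ast(P_n\mid M)$ and the bound on $\Delta_n$,
$$
\Phi(\hat\theta_n^\ast,P)\le m_k^\ast(P_n\mid M)+\Delta_n\le m_k^\ast(P\mid M)+2\Delta_n<m_k^\ast(P\mid M)+\eta ,
$$
which forces $\hat\theta_n^\ast\notin\Theta_\epsilon^\ast(M)$, i.e. $d(\hat\theta_n^\ast,\Theta^\ast)<\epsilon$. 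Hence $\limsup_n d(\hat\theta_n^\ast,\Theta^\ast)\le\epsilon$ on $\Omega_0$; running $\epsilon$ through a sequence decreasing to $0$ yields $d(\hat\theta_n^\ast,\Theta^\ast)\to0$ on $\Omega_0$, which is the first assertion.

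The argument for this proposition is routine once the earlier results are granted, so I do not foresee a real obstacle here; the substantive content lives in what it invokes, namely the DeHardt bracketing behind the uniform SLLN of Proposition \ref{prop:USLLN} and the measurability of the selection $\hat\theta_n^\ast$ (cf.\ Pfanzagl, 1996). The one thing I would be careful about is the null-set bookkeeping: the $\limsup$ estimate must be taken on the single event $\Omega_0$ for every rational $\epsilon>0$ simultaneously before letting $\epsilon\to0$, so that the conclusion holds almost surely rather than only for each fixed $\epsilon$.
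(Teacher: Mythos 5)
Your proof is correct and follows essentially the same route the paper intends: the paper's own ``proof'' of this proposition merely defers to the argument of Theorem \ref{theorem:1}, which on the compact space $\Theta_k^{\ast}(M)$ reduces exactly to your combination of the uniform SLLN (Proposition \ref{prop:USLLN}), the continuity/compactness argument giving $\Theta^{\ast}\neq\emptyset$ and the identification condition, and the standard Wald-type contradiction. Your handling of the null-set bookkeeping over rational $\epsilon$ and of the existence of $\hat{\theta}_n^{\ast}$ is consistent with (indeed slightly more explicit than) the paper.
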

\begin{proof}
Since the uniform SLLN and the continuity of $\Phi(\cdot,\;P)$,
the proof of this proposition is given by the similar argument of the proof of the following consistency theorem.
\end{proof}

In a study by Pollard (1981), 
the uniqueness of the parameter is also assumed for the strong consistency theorem. 
As discussed in Section $\ref{section:3}$,
we cannot assume the uniqueness condition.
Thus, 
the condition that $m_j(P) > m_{k}(P)$ for $j=1,\;2,\;\dots,\;k-1$ 
is assumed instead of the uniqueness condition.

This condition is equivalent to 
the distinctness condition that 
$F(k)$ has $k$ distinct points for all $(F(k),\;A(k))\in \Theta^{\prime}(k)$. 
Indeed, 
suppose that there exists $\theta=(F(k),\;A(k))\in \Theta^{\prime}(k)$ such that
$F(k)$ have $k-1$ or fewer distinct points; that is, $\#(F(k))<k$.
There exists $i\in \mathbb{N}$ such that $i<k$ and $\theta \in \Xi_{k}$.
Then, $m_i(P)=m_k(P)$, which contradicts to $m_i(P)>m_k(P)$.
Thus, 
the condition that $m_j(P) > m_{k}(P)$ for $j=1,\;2,\;\dots,\;k-1$ implies
the distinctness condition.
Moreover, 
this condition is equivalent to $m_{k-1}(P)>m_{k}(P)$ since $m_{k}(P) \ge m_l(P)$ for each $k,\;l\in\mathbb{N}$ satisfying $k<l$.

The following main theorem gives the sufficient condition for the strong consistency of the estimator of RKM clustering.
\begin{theorem}\label{theorem:1}
Suppose that $\int \phi(\|\bm{x}\|)P(d\bm{x})<\infty$
and 
that $m_j(P) > m_{k}(P)$ for $j=1,\;2,\;\dots,\;k-1$.
Then, $\Theta^{\prime}\neq \emptyset$,
$$
\lim_{n\rightarrow \infty}d(\hat{\theta}_n,\;\Theta^{\prime})=0\quad
\mathrm{a.s.}
,\;\text{and }
\lim_{n\rightarrow \infty} m_k(P_n)=m_k(P)\quad
\mathrm{a.s.}
$$
\end{theorem}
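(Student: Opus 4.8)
The plan is to reduce the unbounded parameter space $\Xi_k$ to a compact set and then invoke the already-established machinery: the uniform SLLN (Proposition~\ref{prop:USLLN}), the continuity of $\Phi(\cdot,P)$ (Proposition~\ref{prop:C}), the existence of population optimizers (Proposition~\ref{prop:A}), and the identification condition stated just after it. The existence claim $\Theta^{\prime}\neq\emptyset$ is exactly Proposition~\ref{prop:A}, so the work is entirely in the two limit statements.

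The first step is to fix $M>0$ large enough that $F\subset B_q(5M)$ for every $(F,A)\in\Theta^{\prime}$ (this is furnished by Lemma~\ref{lemma:B}); set the compact parameter space to $\Theta_k^{\ast}(5M)$. The key reduction is to show that, almost surely for all large $n$, the sample optimizer $\hat\theta_n$ may be taken inside $\Theta_k^{\ast}(5M)$, i.e. that $m_k(P_n)=m_k^{\ast}(P_n\mid 5M)$ eventually. This follows from a coercivity-type argument: if some candidate centroid has large norm, the corresponding point contributes little to reducing the loss, so by the uniform SLLN applied on a slightly larger ball together with the strict inequality $m_{k-1}(P)>m_k(P)$, any configuration with a centroid outside $B_q(5M)$ has $P_n$-loss bounded below by something strictly exceeding $m_k^{\ast}(P_n\mid 5M)$ for all large $n$. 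Once this reduction is in place, the second step is the standard Pollard-style argument on the compact space: by the uniform SLLN, $\sup_{\theta\in\Theta_k^{\ast}(5M)}|\Phi(\theta,P_n)-\Phi(\theta,P)|\to 0$ a.s.; evaluating at $\hat\theta_n$ and at a fixed point of $\Theta^{\prime}$ gives $m_k(P_n)\to m_k(P)$ a.s. For the set-distance claim, suppose toward a contradiction that $d(\hat\theta_n,\Theta^{\prime})\ge\epsilon$ infinitely often; then $\Phi(\hat\theta_n,P)\ge\inf_{\theta:d(\theta,\Theta^{\prime})\ge\epsilon}\Phi(\theta,P)=:m_k(P)+\eta$ with $\eta>0$ by the identification condition, while uniform convergence forces $\Phi(\hat\theta_n,P)=\Phi(\hat\theta_n,P_n)+o(1)=m_k(P_n)+o(1)\to m_k(P)$, a contradiction.

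The main obstacle is the reduction step: unlike in Pollard's original $k$-means setting, where one argues directly about centers in $\mathbb{R}^p$, here the centroids $\bm{f}_j$ live in $\mathbb{R}^q$ but are always composed with a columnwise-orthonormal $A$, so a large $\|\bm{f}_j\|$ forces $\|A\bm{f}_j\|$ to be equally large; one must check that the orthonormality of $A$ does not interfere with the coercivity estimate and that the bound $M$ can indeed be chosen uniformly over all of $\Theta^{\prime}$ and all large $n$ simultaneously. This requires care: one cannot apply the uniform SLLN on $\Theta_k^{\ast}(M)$ to control configurations with centroids outside $B_q(M)$, so the argument must instead compare the loss of a ``bad'' configuration against that of a fixed good one, dropping the far-away centroid to produce a $(k-1)$-point configuration and using $m_{k-1}(P)>m_k(P)$ to get the needed strict gap. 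The remaining steps --- the two applications of uniform convergence and the identification-condition contradiction --- are routine once the reduction and Proposition~\ref{prop:A} are granted.
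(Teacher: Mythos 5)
Your proposal is correct and follows essentially the same route as the paper: existence via Proposition~\ref{prop:A}, localization of the sample optimizers to the compact set $\Theta_k^{\ast}(5M)$ by deleting far-away centroids and comparing against a $(k-1)$-point configuration using the gap $m_{k-1}(P)>m_k(P)$ (the paper's Lemmas~\ref{lemma:1} and~\ref{lemma:2}), and then the standard Pollard/van der Vaart argument combining the uniform SLLN, continuity, and the identification condition. The only detail worth adding when you write this out is the preliminary step (Lemma~\ref{lemma:1}) guaranteeing that at least one centroid lies in the small ball $B_q(M)$, which is what lets you bound the reassignment cost of the points belonging to the deleted centroids.
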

\section{Proof of Theorem $\ref{theorem:1}$}\label{section:6}
Because almost sure convergence is dealt with, 
null sets of elements exists for which the convergence does not hold.
Hereafter, $\Omega_1$ denotes the set obtained by avoiding a proper null set from $\Omega$.
In the first step of the proof,
when $n$ is sufficiently large, 
the estimators of the cluster centers are contained within a compact ball that does not depend on $\omega \in \Omega$.
For convenience, it is assumed that $\phi(r) \rightarrow \infty$ as $r\rightarrow \infty$.
When $\phi$ is bounded, the proof is a little complicated. 

First, 
we prove the following lemma.
\begin{lemma}\label{lemma:1}
Suppose that $\int \phi(\|\bm{x}\|)P(d\bm{x})<\infty$.
Then, there exists $M>0$ such that
$$
P\left(\bigcup_{n=1}^{\infty}\bigcap_{m=n}^{\infty}\{\omega \mid \forall (F_{m},\;A_m) \in \Theta_m^{\prime};\; F_{m} (\omega)\cap B_q(M)\neq \emptyset\}\right)
=1.
$$
\end{lemma}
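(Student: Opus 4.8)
The plan is to carry out the standard first step of a Pollard-type consistency argument: a sequence of empirical minimizers cannot have all of its centroids escaping to infinity, because a cheap feasible competitor keeps the optimal empirical value bounded, whereas centroids of large norm force that value to blow up.

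First I would fix, once and for all, a radius $R_0>0$ with $P(\{\bm x:\|\bm x\|\le R_0\})\ge 1/2$ (possible since $P$ is a probability measure) and put $c:=\int\phi(\|\bm x\|)P(d\bm x)<\infty$. Applying the SLLN to the $P$-integrable maps $\bm x\mapsto\phi(\|\bm x\|)$ and $\bm x\mapsto\mathbf 1\{\|\bm x\|\le R_0\}$ yields a set $\Omega_1$ with $P(\Omega_1)=1$ on which $\int\phi(\|\bm x\|)P_m(d\bm x)\to c$ and $P_m(\{\|\bm x\|\le R_0\})\to P(\{\|\bm x\|\le R_0\})\ge 1/2$. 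Since the singleton $\{\bm 0\}\in\mathcal{R}_k$ is admissible with any loading matrix $A\in\mathcal{O}(p\times q)$, one has $m_k(P_m)\le\Phi(\{\bm 0\},A,P_m)=\int\phi(\|\bm x\|)P_m(d\bm x)$, so on $\Omega_1$ we get $m_k(P_m)<c+1$ for all large $m$.

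For the matching lower bound I would suppose $(F_m,A_m)\in\Theta_m^{\prime}$ with $F_m(\omega)\cap B_q(M)=\emptyset$, i.e.\ $\|\bm f\|>M$ for every $\bm f\in F_m$. Because $A_m$ is columnwise orthonormal, $\|A_m\bm f\|=\|\bm f\|>M$, so for every $\bm x$ with $\|\bm x\|\le R_0$ and every $\bm f\in F_m$ we have $\|\bm x-A_m\bm f\|\ge\|\bm f\|-\|\bm x\|>M-R_0$; since $\phi$ is nondecreasing this gives $\min_{\bm f\in F_m}\phi(\|\bm x-A_m\bm f\|)\ge\phi(M-R_0)$, whence
$$m_k(P_m)=\Phi(F_m,A_m,P_m)\ge\phi(M-R_0)\,P_m(\{\|\bm x\|\le R_0\}),$$
which on $\Omega_1$ is at least $\phi(M-R_0)/4$ for all large $m$. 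I would then choose $M$ deterministically --- depending only on $R_0$ and $c$ --- so large that $\phi(M-R_0)>4(c+1)$; this is the one place the convention $\phi(r)\to\infty$ is used. For such $M$, every $\omega\in\Omega_1$ admits an $N(\omega)$ beyond which the two displayed bounds are incompatible, so no $(F_m,A_m)\in\Theta_m^{\prime}$ can satisfy $F_m(\omega)\cap B_q(M)=\emptyset$ (when $\Theta_m^{\prime}=\emptyset$ the property holds vacuously). Re-expressing ``holds for all $m\ge N(\omega)$'' as membership in $\bigcup_n\bigcap_{m\ge n}\{\cdots\}$ and invoking $P(\Omega_1)=1$ finishes the argument.

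I expect the only real obstacle to be the hypothesis $\phi(r)\to\infty$, which the text already flags: the contradiction hinges on driving $\phi(M-R_0)$ above the bounded quantity $4(c+1)$, and this is impossible when $\phi$ is bounded. For bounded $\phi$ I would instead use a perturbation argument --- relocating one far-away centroid toward a fixed ball carrying positive $P$-mass strictly decreases the empirical loss by an amount one can lower-bound via the uniform continuity and the growth condition $\phi(2r)\le\lambda\phi(r)$ exploited in Section~\ref{section:4} --- but, following the paper's stated convention, I would present the argument above and leave the bounded case aside.
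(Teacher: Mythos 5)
Your argument is correct and is essentially the paper's own proof: an upper bound on $m_k(P_m)$ from the trivial competitor $F_0=\{\bm 0\}$ via the SLLN, a lower bound $\phi(M-R_0)\,P_m(B_p(R_0))$ when all centers avoid $B_q(M)$ (using columnwise orthonormality of $A_m$), and a deterministic choice of $M$ exploiting $\phi(r)\to\infty$. The only differences are cosmetic (a ball of mass at least $1/2$ and explicit thresholds $N(\omega)$ instead of an arbitrary positive-mass ball and a subsequence contradiction), and your remark about the bounded-$\phi$ case matches the paper's own caveat.
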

\begin{proof}
Select an appropriate value $r>0$ to satisfy the condition that 
the ball $B_p(r)$ has positive $P$ measure, i.e., 
$P(B_p(r))>0$.
Let $M$ be sufficiently large for satisfying $M>r$ and
\begin{align}\label{eq:3.1}
\phi(M-r)P(B_p(r))>\int \phi(\|\bm{x}\|)P(d\bm{x}).
\end{align}
From the definition of $\hat{\theta}_n=(F_n,\;A_n)$, 
$\Phi(F_n,\;A_n,\;P)\le \Phi(F_0,\;A,\;P)$ 
for any set $F_0$ containing at most $k$ points and any $A \in \mathcal{O}(p\times q)$.
The parameter $F_0$ is chosen such that it only consists of the origin.
Then, by SLLN, 
$$
\Phi(F_0,\;A,\;P_n) =\int \phi(\|\bm{x}\|)P_n(d\bm{x})
\rightarrow 
\int \phi(\|\bm{x}\|)P(d\bm{x})\quad \mathrm{a.s.},
$$
for each $A \in \mathcal{O}(p\times q)$.

Let $\Omega^{\prime}:= \{ \omega \in \Omega_1 \mid \forall n\in \mathbb{N};\;\exists m\ge n;\;\exists (F_{m},\;A_m) \in \Theta_m^{\prime};\;F_{m}(\omega)\cap B_q(M) =\emptyset \}$.
By the axiom of choice, for an arbitrary $\omega \in \Omega^{\prime}$ 
there exists a subsequence $\{n_l\}_{l\in \mathbb{N}}$ 
such that $n_s <n_t\;(s<t)$ and $F_{n_{l}}\cap B_q(M)=\emptyset$.
Thus, 
\begin{align*}
\lim\sup_{l}\Phi(F_{n_l},\;A_{n_l},\;P_{n_l}) 
&\ge
\lim\sup_{l} \frac{1}{n_l}\sum_{i\in \{i\mid \bm{X}_i\in B_p(r) \}}\min_{1\le j\le k}
\phi(\| \bm{X}_i -A_{n_l}\bm{f}_{j} \|)\\
&\ge  
\lim\sup_{l} \frac{1}{n_l}\sum_{i\in \{i\mid \bm{X}_i\in B_p(r) \}}\phi(M-r)\\
&=
\phi(M-r)\lim\sup_{l}P_{n_l}(B_p(r)) =\phi(M-r)P(B_p(r)).
\end{align*}
On the other hand, 
$\lim\sup_{l}m_{k}(P_{n_{l}}) \le \lim_{l}\Phi(F_0,\;A,\;P_{n_l})$
because $m_{k}(P_{n_{l}})\le \Phi(F_0,\;A,\;P_{n_l})$.
Therefore, we have
$
\lim\sup_{l} m_k(P_{n_l}) \le  \int \phi(\|\bm{x}\|)P(d\bm{x})
$
and
$
\lim\sup_{l}\Phi(F_{n_l},\;A_{n_l},\;P_{n_l}) > \int \phi(\|\bm{x}\|)P(d\bm{x})
$, 
which is a contradiction.
Therefore, $P(\Omega^{\prime})=0$, that is,
$$
P\left(\bigcup_{n=1}^{\infty}\bigcap_{m=n}^{\infty}\{\omega \mid \forall (F_{m},\;A_m) \in \Theta_m^{\prime};\; F_{m} (\omega)\cap B_q(M)\neq \emptyset\}\right)
=1.
$$
\end{proof}

Without loss of generality,
all $F_n$ can be assumed contain at least one point of $B_q(M)$ when $n$ is sufficiently large.
The next lemma shows that 
for sufficiently large $n$, there exists $M>0$ such that
the closed ball $B_q(5M)$ 
contains all estimators of centers.
When $k=1$, the next lemma is obviously satisfied.

From the results in Section $\ref{section:4}$ and using the same arguments in the final part of this section, 
the conclusions of the theorem are proven when $k=1$.
\begin{lemma}\label{lemma:2}
Under the assumption of the theorem, 
there exists $M>0$ such that
$$
P\left(\bigcup_{n=1}^{\infty}\bigcap_{m=n}^{\infty}\{\omega \mid \forall (F_{m},\,A_m) \in \Theta_m^{\prime};\;F_{m}(\omega)\subset B_q(5M)\}\right)
=1.
$$
\end{lemma}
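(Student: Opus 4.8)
The plan is to argue by contradiction. If, with positive probability and infinitely often, some global $k$-center optimizer $(F_n,A_n)\in\Theta_n^{\prime}$ has a center lying outside $B_q(5M)$, I would delete all such stray centers, obtaining a configuration that uses at most $k-1$ centers, all inside $B_q(5M)$, and whose $P_n$-loss exceeds $m_k(P_n)$ only by a small tail term. Passing to the limit, this would squeeze $m_{k-1}(P)$ between $m_k(P)$ plus that (arbitrarily small) tail and itself, contradicting the hypothesis $m_{k-1}(P)>m_k(P)$.

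Concretely, I would first fix the constant: take the $M$ furnished by Lemma~\ref{lemma:1} and enlarge it, if necessary, so that in addition $\lambda\int_{\|\bm{x}\|\ge 2M}\phi(\|\bm{x}\|)P(d\bm{x})<m_{k-1}(P)-m_k(P)$ --- possible since $\int\phi(\|\bm{x}\|)P(d\bm{x})<\infty$ forces this tail to $0$ as $M\to\infty$, and the conclusion of Lemma~\ref{lemma:1} is preserved when $M$ grows (as $B_q(M)\subset B_q(M')$). Suppose Lemma~\ref{lemma:2} fails for this $M$. Then on a set of positive probability, which I intersect with $\Omega_1$, with the good event of Lemma~\ref{lemma:1}, and with the a.s.\ events $\int_{\|\bm{x}\|\ge 2M}\phi(\|\bm{x}\|)P_n(d\bm{x})\to\int_{\|\bm{x}\|\ge 2M}\phi(\|\bm{x}\|)P(d\bm{x})$ (SLLN), $\limsup_n m_k(P_n)\le m_k(P)$ (take $\theta_0\in\Theta^{\prime}$, nonempty by Proposition~\ref{prop:A}, and apply SLLN to $\Phi(\theta_0,\cdot\,)$), and $m_{k-1}^{\ast}(P_n\mid 5M)\to m_{k-1}^{\ast}(P\mid 5M)$ (Proposition~\ref{prop:SC} for $k-1$ clusters and radius $5M$), each $\omega$ admits a subsequence $n_l\uparrow\infty$ with $(F_{n_l},A_{n_l})\in\Theta_{n_l}^{\prime}$ possessing a center of norm $>5M$ and --- by Lemma~\ref{lemma:1}, for $l$ large --- also a center $\bm{f}_1^{(n_l)}$ of norm $\le M$.

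The crux is the trimming estimate. Set $F_{n_l}^{\circ}:=F_{n_l}\cap B_q(5M)$, so $1\le\#F_{n_l}^{\circ}\le k-1$ and $\bm{f}_1^{(n_l)}\in F_{n_l}^{\circ}$. Since $A_{n_l}$ is columnwise orthonormal ($\|A_{n_l}\bm{g}\|=\|\bm{g}\|$) and $\phi$ is monotone, for $\bm{x}\in B_p(2M)$ each discarded center $\bm{g}$ satisfies $\|\bm{x}-A_{n_l}\bm{g}\|\ge\|\bm{g}\|-\|\bm{x}\|>3M\ge\|\bm{x}-A_{n_l}\bm{f}_1^{(n_l)}\|$, so $\min_{\bm{f}\in F_{n_l}^{\circ}}\phi(\|\bm{x}-A_{n_l}\bm{f}\|)=\min_{\bm{f}\in F_{n_l}}\phi(\|\bm{x}-A_{n_l}\bm{f}\|)$ there; while for $\|\bm{x}\|>2M$ the difference of the two minima is at most $\phi(\|\bm{x}\|+M)\le\phi(2\|\bm{x}\|)\le\lambda\phi(\|\bm{x}\|)$. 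Integrating against $P_{n_l}$ and using $\Phi(F_{n_l},A_{n_l},P_{n_l})=m_k(P_{n_l})$,
\[
m_{k-1}^{\ast}(P_{n_l}\mid 5M)\le\Phi(F_{n_l}^{\circ},A_{n_l},P_{n_l})\le m_k(P_{n_l})+\lambda\int_{\|\bm{x}\|\ge 2M}\phi(\|\bm{x}\|)P_{n_l}(d\bm{x}).
\]
Letting $l\to\infty$ at the chosen $\omega$ and invoking the three a.s.\ events, together with $m_{k-1}(P)\le m_{k-1}^{\ast}(P\mid 5M)$ (as $\Theta_{k-1}^{\ast}(5M)\subset\Xi_{k-1}$), yields $m_{k-1}(P)\le m_k(P)+\lambda\int_{\|\bm{x}\|\ge 2M}\phi(\|\bm{x}\|)P(d\bm{x})<m_{k-1}(P)$, a contradiction; hence the bad event is null. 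The step I expect to require the most care is this trimming estimate: verifying that deleting the centers outside $B_q(5M)$ genuinely leaves the integrand unchanged on $B_p(2M)$ (precisely where the factor $5$ and the orthonormality of $A_{n_l}$ are used) and that the residual discrepancy off $B_p(2M)$ is dominated by a single $P$-integrable tail, together with confirming that Proposition~\ref{prop:SC} and the bound $\limsup_n m_k(P_n)\le m_k(P)$ are legitimately available at the relevant indices.
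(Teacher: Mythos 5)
Your proposal is correct and follows essentially the same route as the paper: it uses Lemma~\ref{lemma:1} to secure a center in $B_q(M)$, deletes the centers outside $B_q(5M)$, verifies that the loss is unchanged on $\{\|\bm{x}\|\le 2M\}$ and increases by at most $\lambda\int_{\|\bm{x}\|\ge 2M}\phi(\|\bm{x}\|)P_n(d\bm{x})$ elsewhere, and then contradicts $m_{k-1}(P)>m_k(P)$ via Proposition~\ref{prop:SC} applied to $m_{k-1}^{\ast}(\cdot\mid 5M)$. Your write-up is, if anything, slightly more explicit than the paper's about which almost-sure events are being intersected and why enlarging $M$ preserves the conclusion of Lemma~\ref{lemma:1}.
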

\begin{proof}
Choose $M>0$ sufficiently large to satisfy the inequality $(\ref{eq:3.1})$ and
\begin{align}\label{eq:3.2}
\lambda\int_{\|\bm{x}\|\ge 2M}\phi(\|\bm{x}\|)P(d\bm{x})<\epsilon,
\end{align}
where $\epsilon >0$ is selected to ensure $\epsilon + m_k(P) <m_{k-1}(P)$.
Note that $m_{j}(P) \le m_{j}^\ast(P\mid M)$ for $j\in \mathbb{N}$.

Suppose that $F_n$ contains at least one center outside $B_q(5M)$ and consider the effect on $\Phi(F_n,\;A,\;P_n)$
by deleting such outside centers from $F_n$ for all $A \in \mathcal{O}(p\times q)$.
From Lemma $\ref{lemma:1}$, all $F_n$ contain at least one center on $B_q(M)$ when $n$ is sufficiently large, say $\bm{f}_1$.
In the worst case, 
the cluster of $\bm{f}_1 \in B_q(M)$ should contain all sample points belonging to clusters outside $B_q(5M)$.
Because these points must be outside $B(2M)$,  
the increment of $\Phi(F_n,\;A,\;P_n)$ due to the deletion of centers outside $B_q(5M)$ from $F_n$ 
would be at most
\begin{align*}
\int_{\|\bm{x}\|\ge 2M}\phi(\|\bm{x}-A\bm{f}_1\|)P_n(d\bm{x})
&\le
\int_{\|\bm{x}\|\ge 2M}\phi(\|\bm{x}\|+\|\bm{f}_1\|)P_n(d\bm{x})\\
&\le
\int_{\|\bm{x}\|\ge 2M}\phi(2\|\bm{x}\|)P_n(d\bm{x})\\
&\le
\lambda\int_{\|\bm{x}\|\ge 2M}\phi(\|\bm{x}\|)P_n(d\bm{x}).
\end{align*}
Denote the set obtained by deleting centers outside $B_q(5M)$ from $F_n$ by $F_n^{\ast}$.
For each $A\in \mathcal{O}(p\times q)$, 
$(F_n^{\ast},\;A)$ is contained in $\Theta_{k-1}^{\ast}(5M)$,
and thus,
$$
\Phi(F_{n}^{\ast},\;A,\;P_n)\ge m_{k-1}^{\ast}(P_n \mid 5M)\ge m_{k-1}(P_n).
$$
For each $\bm{x}$ satisfying $\|\bm{x}\|<2M$ and each $A\in \mathcal{O}(p\times q)$, 
we have
$$
\|\bm{x}-A\bm{f}\|>3M \quad \text{for all $\bm{f}\not\in B_q(5M)$}
$$
and 
$$
\|\bm{x}-A\bm{g}\|<3M \quad \text{for all $\bm{g}\in B_q(M)$}.
$$
Thus, 
$$
\int_{\|\bm{x}\| < 2M} \min_{\bm{f}\in F_{n}}\phi(\|\bm{x}-A\bm{f}\|)P_n(d\bm{x})
=
\int_{\|\bm{x}\| < 2M} \min_{\bm{f}\in F_{n}^{\ast}}\phi(\|\bm{x}-A\bm{f}\|)P_n(d\bm{x}).
$$
for all $A\in \mathcal{O}(p\times q)$.
Note that
$$
\lim_{n \rightarrow \infty}m_{k-1}^{\ast}(P_n\mid 5M) = m_{k-1}^\ast(P\mid 5M)\quad\mathrm{a.s.}
$$
by Proposition $\ref{prop:SC}$.

Let $\Omega^{\ast}:=\left\{\omega \in \Omega_1\mid \forall n \in \mathbb{N};\,\exists m \ge n;\,
\exists (F_{m},\,A_m) \in \Theta_m^{\prime};\,F_m(\omega)\not\subset B_q(5M) \right\}$.
By the axiom of choice, for an arbitrary $\omega \in \Omega^{\ast}$
there exists a subsequence $\{n_l\}_{l\in \mathbb{N}}$ such that $n_s <n_t\;(s<t)$ and $F_{n_l}(\omega)\not\subset B_q(5M)$.
For any $F$ with $k$ or fewer points and any $A\in \mathcal{O}(p\times q)$,
\begin{align}\label{eq:3.3}
 m_{k-1}^\ast(P\mid 5M)
&\le 
\lim\inf_i\Phi(F_{n_{i}}^{\ast},\;A_{n_{i}},\;P_{n_{i}}) 
\le 
\lim\sup_i\Phi(F_{n_{i}}^{\ast},\;A_{n_{i}},\;P_{n_{i}}) \nonumber\\
&=
\lim\sup_i
\biggl[
\int_{\|\bm{x}\| < 2M} \min_{\bm{f}\in F_{n_{i}}}\phi(\|\bm{x}-A_{n_i}\bm{f}\|)P_{n_{i}}(d\bm{x})\nonumber\\
&\qquad\qquad\quad+
\int_{\|\bm{x}\| \ge 2M} \min_{\bm{f}\in F_{n_{i}}^{\ast}}\phi(\|\bm{x}-A_{n_i}\bm{f}\|)P_{n_{i}}(d\bm{x})
 \biggr] \nonumber\\
&\le
\lim\sup_{n}\left[ \Phi(F_n,\;A_n,\;P_n)+\lambda\int_{\|\bm{x}\|\ge 2M}\phi(\|\bm{x}\|)P_n(d\bm{x})\right] \nonumber\\ 
&\le
\lim\sup_{n}\Phi(F,\;A,\;P_n)+\lambda\int_{\|\bm{x}\|\ge 2M}\phi(\|\bm{x}\|)P(d\bm{x}).
\end{align}
Set $(F,\;A) \in \Theta^{\prime}$; that is, $m_k(P)=\Phi(F,\;A,\;P)$.
From the requirement of $M>0$ in the inequality $(\ref{eq:3.2})$ and SLLN, 
the last bound of the inequality $(\ref{eq:3.3})$ is less than
$$
\Phi(F,\;A,\;P)+\epsilon =m_k(P)+\epsilon<m_{k-1}(P).
$$
This is a contradiction.
Thus, the following is obtained
$$
P\left(\bigcup_{n=1}^{\infty}\bigcap_{m=n}^{\infty}\{\omega \mid \forall (F_{m},\,A_m) \in \Theta_m^{\prime};\;F_{m}(\omega)\subset B_q(5M)\}\right)
=1.
$$
\end{proof}

For sufficiently large $n$, all $F_n$ values satisfying 
$$
\inf_{A\in \mathcal{O}(p\times q)}\Phi(F_{n},\;A,\;P_n)=m_{k}(P_n)
$$
lie in $\mathcal{R}_{k}^\ast(5M)$.
From Proposition $\ref{prop:A}$ and Lemma $\ref{lemma:B}$, $\mathcal{R}_{k}^\ast(5M)$ contains all optimal sets 
satisfying 
$$
\inf_{A\in \mathcal{O}(p\times q)}\Phi(F,\;A,\;P)=m_{k}(P).
$$
It also follows that
Pollard (1981) assume that it is large enough to satisfy that $\mathcal{R}_{k}^\ast(5M)$ contains the optimal cluster centers, as the requirement on $M$, 
but this requirement is also unnecessary.

In a similar way of Theorem $5.14$ (van der Vaart, 1998), 
if we obtain the continuity of $\Phi(\cdot,\;\cdot,\;P)$ and the uniform SLLN, i.e.,
$$
\sup_{(F,\;A)\in \Theta_{k}^{\ast}(5M)}| \Phi(F,\;A,\;P_n)-\Phi(F,\;A,\;P) | \stackrel{\rm a.s.}{\longrightarrow} 0,
$$
the theorem is completely proven.

Let 
$$
\tilde{\theta}_n
=
\begin{cases}
\hat{\theta}_{n} & \text{if } \hat{\theta}_{n}\in \Theta_{k}^{\ast}(5M) \\
\theta_{\ast} & \text{if } \hat{\theta}_{n}\not\in \Theta_{k}^{\ast}(5M)
\end{cases},
$$
where $\theta_{\ast}\in \Theta_{k}^{\ast}(5M)$ is chosen to ensure $d(\theta_{\ast},\;\Theta^{\prime})>0$.
Then, 
for a sufficiently large $n$, $\tilde{\theta}_n=\hat{\theta}_n$ by Lemma $\ref{lemma:2}$, 
and 
the following condition is obtained
$$
\lim\sup_n\left[\Phi(\tilde{\theta}_n,\;P_n)- \inf_{\theta\in \Theta^{\prime}}\Phi(\theta,\;P_n)\right]\le 0\quad\mathrm{a.s.}
$$
Since $\lim\sup_{n}\Phi(\theta_0,\;P_n) =\Phi(\theta_0,\;P)\;(=m_k(P))$ for any fixed $\theta_0 \in \Theta^{\prime}$,
$$
\lim\sup_{n}\inf_{\theta\in \Theta^{\prime}}\Phi(\theta,\;P_n)\le \lim\sup_{n}\Phi(\theta_0,\;P_n) = m_k(P)\quad\mathrm{a.s.}
$$
Thus, 
\begin{align}\label{eq:3.4}
0
&\ge
\lim\sup_n\Phi(\tilde{\theta}_n,\;P_n)-\lim\sup_n\inf_{\theta\in \Theta^{\prime}}\Phi(\theta,\;P_n)
\nonumber \\
&\ge
\lim\sup_n\Phi(\tilde{\theta}_n,\;P_n)-m_k(P)\quad
\mathrm{a.s.}
\end{align}
Let $\Theta_{\epsilon}^{\ast}(5M):=\{\theta \in \Theta_k^{\ast}(5M) \mid d(\theta,\;\Theta^{\prime})\ge \epsilon\}$ for each $\epsilon>0$.
From the uniform SLLN, 
\begin{align}\label{eq:3.5}
\lim\inf_{n}\inf_{\theta\in \Theta_{\epsilon}^{\ast}(5M)}\Phi(\theta,\;P_n) 
\ge
\inf_{\theta\in \Theta_{\epsilon}^{\ast}(5M)}\Phi(\theta,\;P)\quad \mathrm{a.s.}
\end{align}
for all $\epsilon >0$.
An arbitrary $\epsilon>0$ is selected.
From Corollary $\ref{col:A}$ and the inequalities $(\ref{eq:3.4})$ and $(\ref{eq:3.5})$,
we have
\begin{align}\label{eq:main}
\lim\inf_{n}\inf_{\theta\in \Theta_{\epsilon}^{\ast}(5M)}\Phi(\theta,\;P_n) 
>
\lim\sup_n\Phi(\tilde{\theta}_n,\;P_n)
\quad
\mathrm{a.s.}
\end{align}
That is, for any $\omega \in \Omega$ satisfying the inequality $(\ref{eq:main})$,
there exists $n_0\in \mathbb{N}$ such that 
$$
\inf_{\theta\in \Theta_{\epsilon}^{\ast}(5M)}\Phi(\theta,\;P_n)
>
\Phi(\hat{\theta}_n,\;P_n)
=
\Phi(\tilde{\theta}_n,\;P_n)
$$
for all $n\ge n_0$.
Conversely, suppose that there exists $n\ge n_0$ such that $d(\hat{\theta}_n,\;\Theta^{\prime})\ge \epsilon$.
Then, we obtain
$$
\inf_{\theta\in \Theta_{\epsilon}^{\ast}(5M)}\Phi(\theta,\;P_n) = \Phi(\hat{\theta}_n,\;P_n),
$$
which is a contradiction.
Thus, we obtain that $d(\hat{\theta}_n,\;\Theta^{\prime})< \epsilon$ for all $n\ge n_0$.
That is,
\begin{align*}
\lim_{n\rightarrow \infty}d(\hat{\theta}_n,\;\Theta^{\prime})=0\quad
\mathrm{a.s.}
\end{align*}
is proven.
From the continuity of $\Phi(\cdot,\;P)$,
the following is obtained:
\begin{align*}
\lim_{n\rightarrow \infty} m_k(P_n)=m_k(P)\quad
\mathrm{a.s.}
\end{align*}
\section{Selection of the number of dimensions}\label{section:7}

In RKM clustering,
the numbers of clusters and dimensions, $k$ and $q$, have to be appropriately
determined such that the cluster result can be optimized.
For determining the number of cluster, 
Wang (2010) proposed a new selection criterion based on clustering stability.
This criterion can be applied for determining other turning parameters with some clustering method
(e.g., Sun et al., 2012).

In this section, 
we propose a new simple criterion for determining the number of dimensions under given cluster number, 
which is not based on clustering stability.
We also propose a consistent estimator of the criterion.
Moreover,
we illustrate the effectiveness of the criterion through numerical experiments.
\subsection{New criterion for determining the number of dimensions}
First,
we define a variance ratio criterion for a population distribution $P$ by
$$
VR(q\mid P) 
:=
\inf_{(F,\;A)\in \Theta^{\prime}}
\frac{\int \min_{\bm{f}\in F}\|A^T\bm{x}-\bm{f}\|^2P(d\bm{x})}{\int \|A^T\bm{x}-A^T\bm{\mu}\|^2P(d\bm{x})},
$$
where 
$\bm{\mu} = \int \bm{x} P(d\bm{x})$.

Here, we assume that 
the population global optimal coefficient matrices are determined uniquely without the rotational indeterminacy of $A$, 
that is, 
there exists $(F_0,\;A_0)\in \Theta^{\prime}$ such that for all $(F,\;A)\in \Theta^{\prime}$ 
there exists $R\in \mathcal{O}(q)$ such that $A_0=AR$.
Let $(F,\;A),\;(F_\ast,\;A_\ast)\in \Theta^{\prime}$ with $F\neq F_\ast$ or $A\neq A_\ast$.
We have $\Phi(F,\;A,\;P)=\Phi(F_\ast,\;A_\ast,\;P)$ and $\int\|A^T\bm{x}\|^2P(d\bm{x})=\int\|A^T_\ast \bm{x}\|^2P(d\bm{x})$.
Since $$
\Phi(F,\;A,\;P)
=
\int \|\bm{x}\|^2P(d\bm{x}) -\int \|A^{T}\bm{x}\|^2P(d\bm{x})+
\int \min_{\bm{f}\in F}\|A^T\bm{x}-\bm{f}\|^2P(d\bm{x}),
$$
we obtain
$$
\frac{\int \min_{\bm{f}\in F}\|A^T\bm{x}-\bm{f}\|^2P(d\bm{x})}{\int \|A^T\bm{x}-A^T\bm{\mu}\|^2P(d\bm{x})}
=
\frac{\int \min_{\bm{f}_\ast\in F_\ast}\|A_\ast^T\bm{x}-\bm{f}_\ast\|^2P(d\bm{x})}{\int \|A_\ast^T\bm{x}-A_\ast^T\bm{\mu}\|^2P(d\bm{x})}.
$$

Unfortunately, 
we cannot obtain the value of this criterion 
since the population distribution is unknown.
However, 
we can construct a consistent estimator of $VR(q\mid P)$.
We define a estimator of $VR(q\mid P)$ by
$$
\widehat{VR}(q\mid P_n):=
\frac{\int \min_{\hat{\bm{f}}_n\in \hat{F}}\|\hat{A}_n^T\bm{x}-\hat{\bm{f}}_n\|^2P_n(d\bm{x})}
{\int \|\hat{A}_n^T\bm{x}-\hat{A}_n^T\bm{\mu}\|^2P_n(d\bm{x})}, 
$$
where $\hat{\theta}_n=(\hat{F}_n,\;\hat{A}_n)$.
The following theorem gives the sufficient conditions of the strong consistency of the estimator $\widehat{VR}(q\mid P_n)$.
\begin{theorem}\label{theorem:2}
Suppose that $\int \phi(\|\bm{x}\|)P(d\bm{x})<\infty$
and 
$m_1(P)>m_2(P)>\cdots>m_k(P)$.
Then,
$$
\int \|A^T\bm{x}-A^T\bm{\mu}\|^2P(d\bm{x})>0 \quad\text{for all } (F,\;A)\in \Theta^{\prime}
$$
and 
$$
\lim_{n\rightarrow \infty}\widehat{VR}(q\mid P_n)=VR(q \mid P)\quad
\mathrm{a.s.}
$$
\end{theorem}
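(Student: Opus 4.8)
The plan is to derive both assertions from three facts already available: the strong consistency of $\hat\theta_n$ for the set $\Theta^{\prime}$ (Theorem~\ref{theorem:1}), the uniform SLLN and continuity results of Section~\ref{section:4}, and the observation established just before the theorem that the ratio $N(F,A,P)/D(F,A,P)$, where $N(F,A,P):=\int\min_{\bm f\in F}\|A^T\bm x-\bm f\|^2P(d\bm x)$ and $D(F,A,P):=\int\|A^T\bm x-A^T\bm\mu\|^2P(d\bm x)$, takes one and the same value, namely $VR(q\mid P)$, for every $(F,A)\in\Theta^{\prime}$. Throughout I use the decomposition $\Phi(F,A,Q)=\int\|\bm x\|^2Q(d\bm x)-\int\|A^T\bm x\|^2Q(d\bm x)+N(F,A,Q)$ (valid here since $\phi(r)=r^2$) for $Q=P$ and $Q=P_n$, and write $\widehat{VR}(q\mid P_n)=N(\hat\theta_n,P_n)/D(\hat\theta_n,P_n)$.

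For the positivity claim I argue by contradiction. If $D(F,A,P)=0$ for some $(F,A)\in\Theta^{\prime}$, then $A^T\bm x=A^T\bm\mu$ for $P$-almost every $\bm x$, so $N(\{A^T\bm\mu\},A,P)=0$; by the decomposition and non-negativity of its last term, $m_1(P)\le\Phi(\{A^T\bm\mu\},A,P)=\int\|\bm x\|^2P(d\bm x)-\int\|A^T\bm x\|^2P(d\bm x)\le\Phi(F,A,P)=m_k(P)$. As $m_1(P)\ge m_k(P)$ always, this forces $m_1(P)=m_k(P)$, contradicting $m_1(P)>m_2(P)>\cdots>m_k(P)$. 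Hence $D(F,A,P)>0$ on $\Theta^{\prime}$, and since $\Theta^{\prime}$ is a compact subset of some $\Theta_k^{\ast}(5M)$ (Proposition~\ref{prop:A}, Lemma~\ref{lemma:B}, and continuity of $\Phi(\cdot,P)$) on which $D(\cdot,P)$ is continuous, $D^{\ast}:=\inf_{\theta\in\Theta^{\prime}}D(\theta,P)>0$.

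For the consistency statement, the hypothesis $m_1(P)>\cdots>m_k(P)$ (in particular $m_{k-1}(P)>m_k(P)$) lets me invoke Theorem~\ref{theorem:1}: $\Theta^{\prime}\ne\emptyset$, $d(\hat\theta_n,\Theta^{\prime})\to0$ a.s., and $\hat\theta_n\in\Theta_k^{\ast}(5M)$ for all large $n$. On the compact set $\Theta_k^{\ast}(5M)$ both $N(\cdot,P)$ and $D(\cdot,P)$ are continuous, because $N(F,A,P)=\Phi(F,A,P)-\int\|\bm x\|^2P(d\bm x)+\mathrm{tr}(A^TSA)$ with $S=\int\bm x\bm x^TP(d\bm x)$ is continuous by Proposition~\ref{prop:C}, and $D(F,A,P)=\mathrm{tr}(A^T\Sigma A)$ with $\Sigma=S-\bm\mu\bm\mu^T$ is continuous in $A$. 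The same identities yield the uniform laws $\sup_{\theta\in\Theta_k^{\ast}(5M)}|N(\theta,P_n)-N(\theta,P)|\to0$ and $\sup_{\theta\in\Theta_k^{\ast}(5M)}|D(\theta,P_n)-D(\theta,P)|\to0$ a.s.: the first from Proposition~\ref{prop:USLLN} and the coordinatewise SLLN $\int\bm x\bm x^TP_n(d\bm x)\to S$, the second from $\int\bm x\bm x^TP_n(d\bm x)\to S$ and $\int\bm xP_n(d\bm x)\to\bm\mu$ a.s. (uniformity over $A$ being immediate since $|\mathrm{tr}(A^TMA)|\le q\|M\|$ for columnwise orthonormal $A$). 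Finally, taking $\theta_n^{\prime}\in\Theta^{\prime}$ with $d(\hat\theta_n,\theta_n^{\prime})\to0$, uniform continuity of $N(\cdot,P),D(\cdot,P)$ on the compact set together with the uniform laws gives $N(\hat\theta_n,P_n)=N(\theta_n^{\prime},P)+o(1)$ and $D(\hat\theta_n,P_n)=D(\theta_n^{\prime},P)+o(1)$ a.s.; since $N(\theta_n^{\prime},P)=VR(q\mid P)\,D(\theta_n^{\prime},P)$ and $D^{\ast}\le D(\theta_n^{\prime},P)\le\sup_{\Theta_k^{\ast}(5M)}D(\cdot,P)<\infty$, it follows that $\widehat{VR}(q\mid P_n)\to VR(q\mid P)$ a.s.

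The step I expect to be the main obstacle is passing from $\hat\theta_n$ -- which, owing to the rotational indeterminacy of RKM, need not converge but only approaches $\Theta^{\prime}$ -- to a single limiting value of the ratio; this is exactly what the combination of $d(\hat\theta_n,\Theta^{\prime})\to0$, the uniform continuity of $N(\cdot,P),D(\cdot,P)$ on the compact parameter set, and the constancy of $N/D$ on $\Theta^{\prime}$ is meant to handle, while the positivity $D^{\ast}>0$ from the first assertion is what prevents the ratio from degenerating. The two uniform SLLNs themselves are routine given Section~\ref{section:4}, since with $\phi(r)=r^2$ the terms beyond $\Phi$ are quadratic forms in $A$ controlled by the ordinary SLLN for the first two empirical moments.
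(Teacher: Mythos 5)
Your proof is correct and follows essentially the same route as the paper: positivity is obtained by the same contradiction (a zero-variance projection forces a one-point center set to attain $m_k(P)$, contradicting $m_1(P)>m_k(P)$), and consistency combines Theorem~\ref{theorem:1} with uniform laws of large numbers for the numerator and denominator over $\Theta_k^{\ast}(5M)$ together with continuity. The only differences are cosmetic: you derive the uniform laws $(\ref{eq:7.1})$--$(\ref{eq:7.2})$ from Proposition~\ref{prop:USLLN} and the SLLN for the first two empirical moments via the quadratic decomposition instead of repeating the bracketing argument, and you make explicit the uniform positive lower bound of the denominator on $\Theta^{\prime}$ and the role of the constancy of the ratio on $\Theta^{\prime}$, both of which the paper leaves implicit.
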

\begin{proof}
Without loss of generality, 
we assume $\bm{\mu}=\bm{0}$.
First, we prove 
$$
\int \|A^T(\bm{x}-\bm{\mu})\|^2P(d\bm{x})>0 \quad\text{for all } (F,\;A)\in \Theta^{\prime}.
$$
Conversely, 
suppose that 
there exists $(F,\;A)\in \Theta^{\prime}(k)$ such that 
$\int \|A^{T}\bm{x}\|^2 P(d\bm{x})=0$.
Then, $\|A^{T}\bm{x}\|^{2}=0$ for all $\bm{x}$ in the support of $P$.
Since 
$$
\Phi(F,\;A,\;P)
=
\int \|\bm{x}-AA^{T}\bm{x}\|^2P(d\bm{x}) +
\int \min_{\bm{f}\in F}\|A^T\bm{x}-\bm{f}\|^2P(d\bm{x}),
$$
$F$ must contain zero.
Let $F_0:=\{\bm{0}\}\in \mathcal{R}_1$
and then $m_k(P)=\Phi(F_0,\;A,\;P)\ge m_1(P)$.
This is a contradiction.

Next, we prove the consistency of $\widehat{VR}(q\mid P_n)$.
From Theorem $\ref{theorem:1}$, 
we have 
$$
\lim_{n\rightarrow \infty}d(\hat{\theta}_n,\;\Theta^{\prime})=0\quad
\mathrm{a.s.}
$$
In the similar way as the proof of the uniform SLLN $(\ref{eq:4.1})$, 
we obtain 
\begin{align}\label{eq:7.1}
\lim_{n\rightarrow \infty}\sup_{A\in \mathcal{O}(p\times q) }\left| \int \|A^T\bm{x}\|^2\,P_n(d\bm{x}) -\int \|A^T\bm{x}\|^2\,P(d\bm{x}) \right| =0 \quad \mathrm{a.s.} 
\end{align}
and 
\begin{align}\label{eq:7.2}
\lim_{n\rightarrow \infty}\sup_{(F,\;A)\in \Theta }\left| \int \min_{\bm{f}\in F}\|A^T\bm{x}-\bm{f}\|^2 \,P_n(d\bm{x}) -\int \min_{\bm{f}\in F}\|A^T\bm{x}-\bm{f}\|^2 \,P(d\bm{x}) \right| =0 \quad \mathrm{a.s.} 
\end{align}
Let $\hat{\theta}_n=(\hat{F}_n,\;\hat{A}_n)$ and $(F,\;A)\in \Theta^{\prime}$.
We have 
$$
\lim_{n\rightarrow \infty} \int \|\hat{A}_n^T\bm{x}\|^2\,P_n(d\bm{x})=\int \|A^T\bm{x}\|^2\,P(d\bm{x})\quad\mathrm{a.s.}
$$
and
$$
\lim_{n\rightarrow \infty}\int \min_{\hat{\bm{f}}_{n}\in \hat{F}_n}\|\hat{A}_n^T\bm{x}-\hat{\bm{f}}_n\|^2 \,P_n(d\bm{x})
=
\int \min_{\bm{f}\in F}\|A^T\bm{x}-\bm{f}\|^2 \,P(d\bm{x})\quad\mathrm{a.s.}
$$
Therefore, 
we obtain
$$
\lim_{n\rightarrow \infty}\widehat{VR}(q\mid P_n)=VR(q \mid P)\quad
\mathrm{a.s.}
$$
\end{proof}

If the number of dimensions is determined larger than the optimal one, 
the subspace of RKM may be influenced from noise variables which do not have cluster structure.
Let $q_\ast$ be the optimal number of dimensions.
Define $VR(0\mid P):=0$ and $VR(q \mid P):=VR(q-1\mid P)$ for $q=\min\{k-1,\;p\}$.
Forward difference at $q_\ast$, $\Delta_{+}(q):=VR(q_\ast+1 \mid P)-VR(q_\ast \mid P)$, 
may be quite larger than backward difference at $q_\ast$, $\Delta_{-}(q):=VR(q_\ast \mid P)-VR(q_\ast-1 \mid P)$.
That is, for the optimal number of dimensions $q_\ast$, 
second order central difference at $q_\ast$, $\Delta_2(q_\ast):=VR(q_\ast+1 \mid P)-2VR(q_\ast \mid P)-VR(q_\ast-1 \mid P)$, 
may be larger than second order central difference at $q\;(q\neq q_\ast)$.
For example, 
we may estimate the optimal number of dimensions by 
$$
\hat{q}:=\arg\max_q \widehat{\Delta_2}(q),
$$
where $\widehat{\Delta_2}(q):=\widehat{VR}(q+1 \mid P)-2\widehat{VR}(q \mid P)-\widehat{VR}(q-1 \mid P)$.
\subsection{Numerical experiments}

In this subsection, 
we examine the effectiveness of the criterion through numerical experiments.
Let $K$ be the number of clusters, 
$q$ be the number of dimensions of the low dimensional space, 
$p_1$ be the number of the informative variables, 
$p_2$ be the number of the correlated noise variables,
and $p_3$ be the number of the independent noise variables. 
Denote $O_{p\times q}$ be the $p\times q$ zero matrix.
The $p_1\times q$ column wise orthogonal matrix is generated randomly, say $A_\ast$.
$K$ cluster centers in low-dimensional space are independently 
generated from the $q$-dimensional uniform distribution on $[-15,\;15]^{q}$,
say $\bm{f}_k\;(k=1,\;\cdots,\;K)$.
Cluster indicators are independently generated from the multinomial distribution 
for $K$ trials with equal probabilities,
say $\bm{u}_{i}=(u_{i1},\;\dots,\;u_{iK})\;(i=1,\;\dots,\;n)$.
Set $A=[A_\ast^T,\;O_{q\times (p_2+p_3)}]^T,\;\Sigma_{p_2}=(\sigma_{ij})_{p_2\times p_2}$ with $\sigma_{ii}=1$ and $\sigma_{ij}=0.25\;(i\neq j)$, and 
$$
\Sigma_p =
\begin{bmatrix}
I_{p_1} 			& O_{p_1\times p_2} & O_{p_1 \times p_3} \\
O_{p_2 \times p_1}	& \Sigma_{p_2}		& O_{p_2 \times p_3} \\
O_{p_3 \times p_1}	& O_{p_2 \times p_3}		& I_{p_3}
\end{bmatrix}.
$$   

The simulated data of $n$ observations, $\bm{x}_i\in \mathbb{R}^p\;(i=1,\;\dots,\;n)$, 
are generated as 
$$
\bm{x}_i = \sum_{k=1}^{K}u_{ik}\left( A\bm{f}_k+\bm{\epsilon}_{ik} \right),
$$
where 
$\bm{\epsilon}_{ik}$ are generated from the $p$-dimensional normal distribution $N(\bm{0},\;\Sigma_p)$.
Let $X=[\bm{x}_i,\;\dots,\;\bm{x}_n]^T$ and $Z$ be the normalized data matrix with zero means and unit variances. 

Here, 
we set $K=8$, $n=400$, $q=2$ or $3$ and $p_1=p_2=p_3=5$ or $10$.
We make $1000$ data sets for each setting, respectively.
Figure $\ref{HCS}$ shows hidden cluster structure $XA$ of the one of data set with setting $n=400,\;q=2$, and $p_1=p_2=p_3=5$.
\begin{figure} 
\includegraphics[scale=0.6]{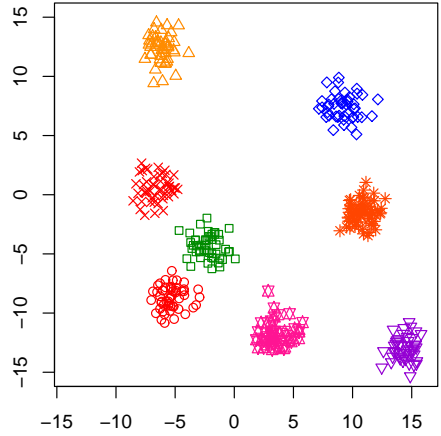}
\caption{Hidden cluster structure $XA$ of the one of data set with setting $n=400,\;q=2$, and $p_1=p_2=p_3=5$.}
\label{HCS}
\end{figure}
Figure $\ref{PCA}$ shows the first two principal components of PCA for $Z$, which is the same data set of Figure $\ref{HCS}$ 
and also shows that 
the first two principal components do not reflect the cluster structure. 
\begin{figure}
\includegraphics[scale=0.6]{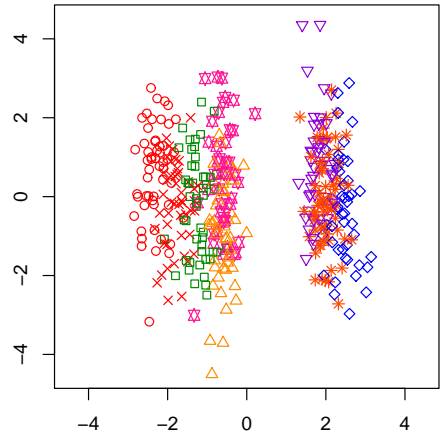}
\caption{First two dimensions of the principal scores of PCA for $Z$, which is the same data set of Figure $\ref{HCS}$
 (ARI of the tandem clustering with first two principal scores is $0.26$).}
\label{PCA}
\end{figure}
Moreover, Figure $\ref{RKM}$ shows the subspace of RKM with $q=2$ for $Z$, which is the same data set of Figure $\ref{HCS}$.
\begin{figure} 
\includegraphics[scale=0.6]{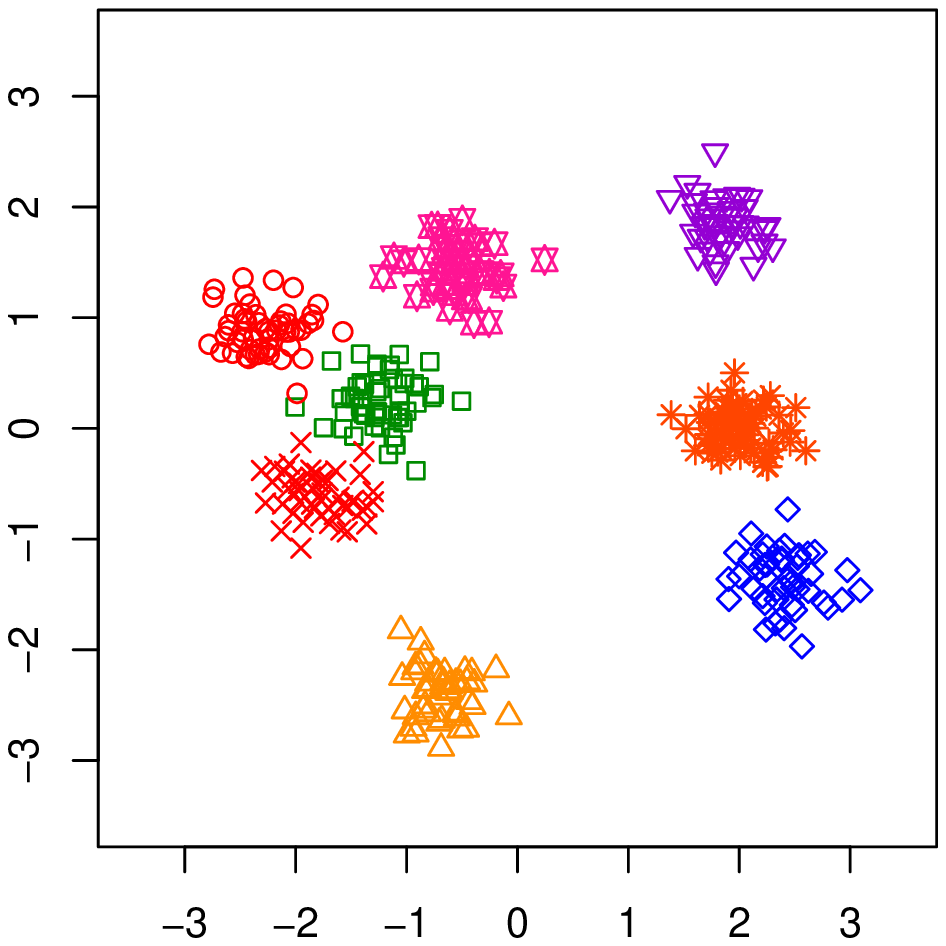}
\caption{The subspace of RKM for $Z$, which is the same data set of Figure $\ref{HCS}$
 (ARI of the RKM clustering with $q=2$ is $0.99$).}
\label{RKM}
\end{figure}
Figure $\ref{ARI}$ shows the adjusted rand indexes (ARI), which is proposed by Hubert and Arabie (1985), of RKM clustering with each number of dimensions of subspace.
In Figure $\ref{ARI}$, we can see that the number of dimensions of the subspace is quite important to the clustering result.  
\begin{figure} 
\includegraphics[scale=0.6]{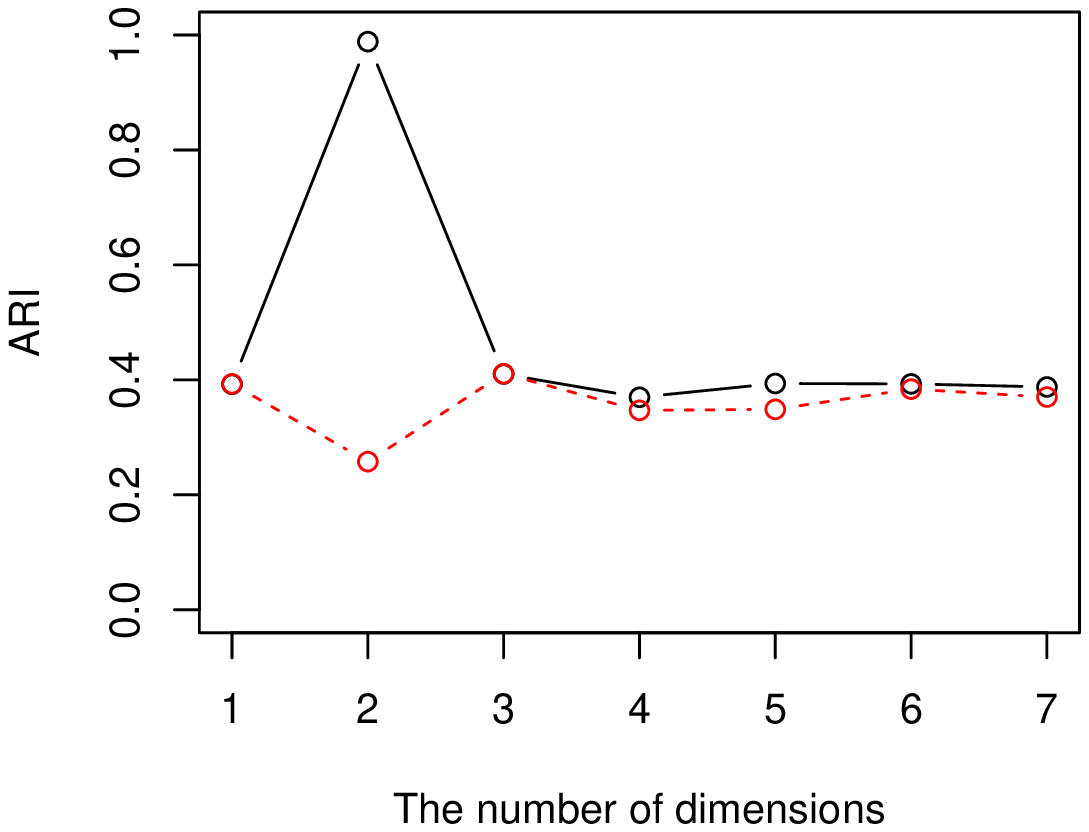}
\caption{ARI scores of RKM and tandem clustering with $q=1,\;2,\;\dots,\;7$ for $Z$, which is the same data set of Figure $\ref{HCS}$. Solid line is corresponded to ARI scores of RKM clustering and dash line is corresponded to ARI scores of tandem clustering.}
\label{ARI}
\end{figure}
Figure $\ref{VR}$ and $\ref{SD}$ show that $\widehat{VR}(q)$ and $\widehat{\Delta_2}(q)$ are useful for estimating the optimal number of dimensions. 
\begin{figure} 
\includegraphics[scale=0.6]{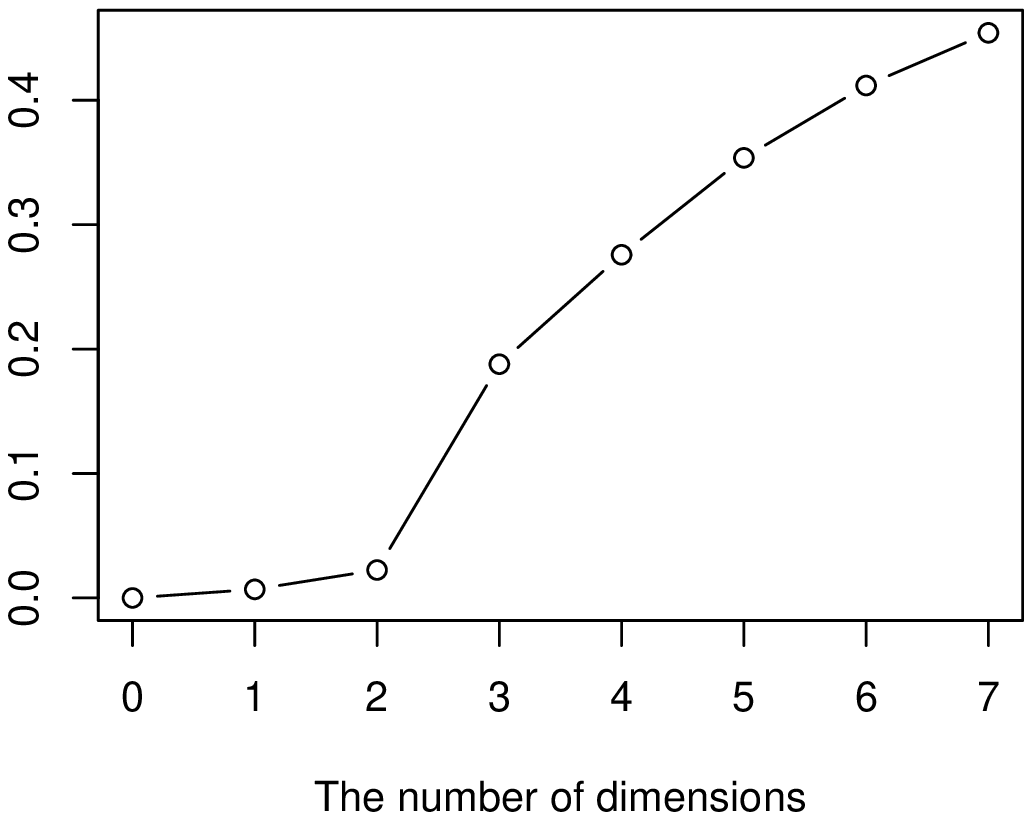}
\caption{$\widehat{VR}(q)$ scores of RKM with $q=1,\;2,\;\dots,\;7$ for $Z$, which is the same data set of Figure $\ref{HCS}$.}
\label{VR}
\end{figure}

\begin{figure} 
\includegraphics[scale=0.6]{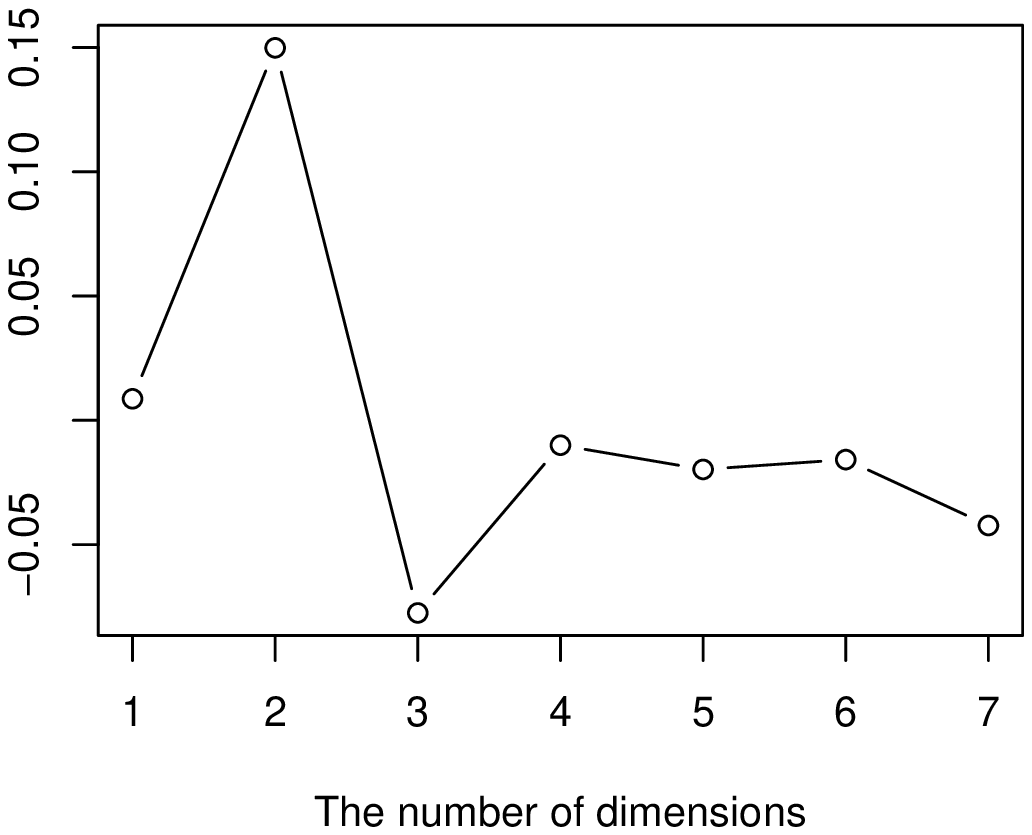}
\caption{$\widehat{\Delta_2}(q)$ scores of RKM with $q=1,\;2,\;\dots,\;7$ for $Z$, which is the same data set of Figure $\ref{HCS}$.}
\label{SD}
\end{figure}
Indeed, 
Table $\ref{AR}$ shows the agreement rates, of the choices by $\hat{q}$ and 
the optimal number $q_\ast:=\arg\max_q ARI(q)$, with each setting for $1000$ data sets.

\begin{table*}
\caption{Agreement rates with each setting for $1000$ data sets.}
\label{AR}
\begin{tabular}{crrrrc}
\hline
$q$ & $p_1=p_2=p_3$ & agreement rate \\
\hline
2 & 5	& 0.84 (837/1000)\\
2 & 10	& 0.95 (947/1000)\\
3 & 5	& 0.73 (726/1000)\\
3 & 10	& 0.89 (890/1000)\\
\hline
\end{tabular}
\end{table*}

\section{Conclusion}
This paper proves the strong consistency of RKM clusterings under i.i.d. sampling 
on the basis of the proof for the conventional $k$-means clustering provided by Pollard (1981).
Since our proof is based on the usual Blum-DeHardt uniform SLLN 
which requires only stationarity and ergodicity (e.g., Peskir, 2000), 
we can obtain the same results for a stationary ergodic process.

Under the i.i.d. condition, 
we can derive the rate of convergence for the convergence of the empirically optimal clustering scheme
if the support of the population distribution is bounded; that is, $P(\|\bm{X}_1\|^2\le B)=1$ for some $B>0$.
From Theorem $1$ in Linder et al. (1994), 
for all $\epsilon>0$ and $n(\epsilon/8B)^2\ge 2$ we can obviously obtain
\begin{align*}
P\left[ |m_k(P_n)-m_k(P)|  \ge \epsilon \right]
&\le
2P\left[ \sup_{\theta\in \Xi_k}|\Phi(\theta,\;P_n)-\Phi(\theta,\;P)|  \ge \epsilon \right]\\
&\le
2P\left[ \sup_{F \in \mathcal{R}_k^{(p)}}|KM(F,\;P_n)-KM(F,\;P)|  \ge \epsilon \right]\\
&\le
8(2n)^{k(p+1)}\exp\left( -\frac{n\epsilon^2}{512B^2}\right),
\end{align*}
where $\phi(r)=r^2$, $\mathcal{R}_k^{(p)}:=\{ R \subset \mathbb{R}^{p} \mid \#(R)\le k \}$, and $KM(F,\;P):=\int \min_{\bm{f}\in F}\|\bm{x}-\bm{f}\|^2P(d\bm{x})$.

Considering the relationship between the conventional $k$-means clustering and RKM clustering, 
the results presented in this paper are applicable to the conventional $k$-means clustering.
The related methods of RKM clustering include 
factorial $k$-means (FKM) clustering proposed by Vichi \& Kiers (2001).
In Terada (2013), 
the strong consistency of FKM clusterings under i.i.d. sampling (or for a stationary ergodic process) has been proven.
The form of sufficient conditions for the strong consistency of FKM clustering 
is similar to the case of RKM clusterings.
Moreover, 
the new simple criterion for determining the number of dimensions under given cluster number
and the consistent estimator of the criterion have been proposed.
Through numerical experiments, 
the effectiveness of the criterion has been illustrated.

Future studies in this regard
will examine the rate of convergence of estimators of RKM clustering and 
will propose the criterion required to determine the number of clusters.


\appendix
\section{The existence of $\Theta^{\prime}$}\label{app}
The existence of the minimum points of $\Phi(\cdot,\;P)$ are proven.
\begin{lemma}\label{lemma:A}
Suppose that $\int\phi(\|\bm{x}\|)P(d\bm{x})<\infty$.
There exists $M>0$ such that, for all $F^{\prime}\in\mathcal{R}_k$ satisfying $F^{\prime}\cap B_q(M)=\emptyset$,
$$
\inf_{A\in \mathcal{O}(p\times q)}\Phi(F^{\prime},\;A,\;P) > \inf_{\theta\in \Theta_{k}^{\ast}(M)}\Phi(\theta,\;P).
$$
\end{lemma}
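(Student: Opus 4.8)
The plan is to show that any configuration of centers all lying far from the origin is strictly worse than some fixed reference configuration, essentially because a faraway center can be replaced by the origin at a controlled cost while the distant center, if retained, would be penalized heavily. First I would fix $r>0$ with $P(B_p(r))>0$ — this is possible since $P$ is a probability measure on $\mathbb{R}^p$. As a reference configuration, take $F_0:=\{\bm{0}\}\in\mathcal{R}_k^\ast(M)$ together with any fixed $A_0\in\mathcal{O}(p\times q)$. For any $A$, using $\phi(0)=0$, monotonicity, and the growth bound $\phi(2t)\le\lambda\phi(t)$, one has the a~priori estimate
\begin{align*}
\inf_{\theta\in\Theta_k^\ast(M)}\Phi(\theta,\;P)
\le
\Phi(F_0,\;A_0,\;P)
=
\int\phi(\|\bm{x}-A_0\bm{0}\|)P(d\bm{x})
=
\int\phi(\|\bm{x}\|)P(d\bm{x})<\infty.
\end{align*}
So it suffices to make $\inf_A\Phi(F^{\prime},\;A,\;P)$ exceed $\int\phi(\|\bm{x}\|)P(d\bm{x})$ for all $F^{\prime}$ disjoint from $B_q(M)$.

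Next I would bound $\Phi(F^{\prime},\;A,\;P)$ from below by restricting the integral to the ball $B_p(r)$. For $\bm{x}\in B_p(r)$, every $\bm{f}\in F^{\prime}$ has $\|\bm{f}\|\ge M$, and since $A$ is columnwise orthonormal $\|A\bm{f}\|=\|\bm{f}\|\ge M$; hence $\|\bm{x}-A\bm{f}\|\ge\|A\bm{f}\|-\|\bm{x}\|\ge M-r$. Because $\phi$ is nonnegative and decreasing, $\phi(\|\bm{x}-A\bm{f}\|)\ge\phi(M-r)$ for every such $\bm{f}$, so $\min_{\bm{f}\in F^{\prime}}\phi(\|\bm{x}-A\bm{f}\|)\ge\phi(M-r)$. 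Discarding the contribution from $\|\bm{x}\|>r$ (which is nonnegative) gives, uniformly in $A$,
\begin{align*}
\Phi(F^{\prime},\;A,\;P)
\ge
\int_{B_p(r)}\min_{\bm{f}\in F^{\prime}}\phi(\|\bm{x}-A\bm{f}\|)P(d\bm{x})
\ge
\phi(M-r)\,P(B_p(r)).
\end{align*}

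Finally I would choose $M$. Since $\phi$ is decreasing and, by the running assumption of this part of the argument, $\phi(t)\to\infty$ as $t\to\infty$ (or more generally since $\int\phi(\|\bm{x}\|)P(d\bm{x})<\infty$ is a fixed finite number while $P(B_p(r))>0$ is a fixed positive number), we may take $M>r$ large enough that
\begin{align*}
\phi(M-r)\,P(B_p(r))>\int\phi(\|\bm{x}\|)P(d\bm{x}).
\end{align*}
Combining the three displays, for every $F^{\prime}\in\mathcal{R}_k$ with $F^{\prime}\cap B_q(M)=\emptyset$,
\begin{align*}
\inf_{A\in\mathcal{O}(p\times q)}\Phi(F^{\prime},\;A,\;P)
\ge
\phi(M-r)\,P(B_p(r))
>
\int\phi(\|\bm{x}\|)P(d\bm{x})
\ge
\inf_{\theta\in\Theta_k^\ast(M)}\Phi(\theta,\;P),
\end{align*}
which is the claim. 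The only delicate point is the choice of $M$ when $\phi$ is \emph{bounded}: then $\phi(M-r)$ cannot be driven to infinity, so one cannot beat an arbitrary finite integral. In that case the same argument still works provided $\int\phi(\|\bm{x}\|)P(d\bm{x})<\sup_t\phi(t)\cdot P(B_p(r))$; I expect the paper handles this by a slightly more careful splitting of the domain (as in the proof of Lemma~\ref{lemma:2}), which is the one step I would need to treat with extra care rather than by the crude bound above.
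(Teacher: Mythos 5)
Your proof is correct and is essentially the paper's own argument: the paper phrases it as a proof by contradiction, but relies on exactly the same ingredients, namely a ball $B_p(r)$ of positive $P$-measure, the reference configuration $F_0=\{\bm{0}\}$ giving the upper bound $\int\phi(\|\bm{x}\|)P(d\bm{x})$, and the choice of $M$ so that $\phi(M-r)P(B_p(r))>\int\phi(\|\bm{x}\|)P(d\bm{x})$ (inequality $(\ref{eq:3.1})$). Your caveat about bounded $\phi$ is well taken but applies equally to the paper's version, which explicitly assumes $\phi(r)\rightarrow\infty$ as $r\rightarrow\infty$ "for convenience" in Section $\ref{section:6}$.
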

\begin{proof}
Argue by contradiction, suppose that for any $M>0$
there exists $F^{\prime}\in\mathcal{R}_k$ such that $F^{\prime}\cap B_q(M)=\emptyset$ and
\begin{align}\label{eq:A.1}
\inf_{A\in \mathcal{O}(p\times q)}\Phi(F^{\prime},\;A,\;P) \le \inf_{\theta\in \Theta_{k}^{\ast}(M)}\Phi(\theta,\;P).
\end{align}
Select an $r>0$ such that 
the ball $B_p(r)$ has a positive $P$-measure, i.e.,
$P(B_p(r))>0$.
A sufficient large $M$ is selected such that $M>r$ and inequality $(\ref{eq:3.1})$ is satisfied.
From the inequality $(\ref{eq:A.1})$, 
\begin{align*}
\int \phi(\|\bm{x}\|)P(\bm{x}) 
&\ge \inf_{\theta\in \Theta_{k}^{\ast}(M)}\Phi(\theta,\;P)
\ge \inf_{A\in \mathcal{O}(p\times q)}\Phi(F^{\prime},\;A,\;P)\\
&\ge \phi(M-r)P(B_p(r)).
\end{align*}
This is a contradiction.
\end{proof}
\begin{lemma}\label{lemma:B}
Suppose that $\int\phi(\|\bm{x}\|)P(d\bm{x})<\infty$ and $m_j(P) > m_{k}(P)$ for $j=1,\;2,\;\dots,\;k-1$.
There exists $M>0$ such that, for any $F^{\prime}\in\mathcal{R}_k$ satisfying $F^{\prime}\not\subset B_q(5M)$,
$$
\inf_{A\in \mathcal{O}(p\times q)}\Phi(F^{\prime},\;A,\;P) 
> 
\inf_{\theta\in \Theta_{k}^{\ast}(5M)}\Phi(\theta,\;P).
$$
\end{lemma}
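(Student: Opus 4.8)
The plan is to argue by contradiction along the lines of Lemma \ref{lemma:A}, but now exploiting the gap condition $m_{k-1}(P) > m_k(P)$ to control the behavior of centers far from the origin. Suppose the conclusion fails: then for every $M>0$ there is a set $F^{\prime}\in\mathcal{R}_k$ with $F^{\prime}\not\subset B_q(5M)$, i.e. containing at least one center of norm exceeding $5M$, such that
$$
\inf_{A\in\mathcal{O}(p\times q)}\Phi(F^{\prime},\;A,\;P) \le \inf_{\theta\in\Theta_k^{\ast}(5M)}\Phi(\theta,\;P).
$$
First I would fix $\epsilon>0$ small enough that $\epsilon + m_k(P) < m_{k-1}(P)$, which is possible precisely because of the hypothesis $m_{k-1}(P)>m_k(P)$; then choose $M$ large enough so that both inequality $(\ref{eq:3.1})$ holds (as in Lemma \ref{lemma:A}, forcing at least one center of $F^{\prime}$ into $B_q(M)$ for near-optimal configurations) and the tail bound $\lambda\int_{\|\bm{x}\|\ge 2M}\phi(\|\bm{x}\|)P(d\bm{x})<\epsilon$ holds, mimicking inequality $(\ref{eq:3.2})$.

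Next I would run the ``delete the far-away centers'' argument that appears in the proof of Lemma \ref{lemma:2}, but at the population level with $P$ in place of $P_n$. Given such an $F^{\prime}$, first note that by the analogue of Lemma \ref{lemma:A} we may assume $F^{\prime}$ also meets $B_q(M)$ (otherwise $\Phi(F^{\prime},A,P)$ already exceeds $\int\phi(\|\bm{x}\|)P(d\bm{x}) \ge \inf_{\theta\in\Theta_k^\ast(M)}\Phi(\theta,P) \ge \inf_{\theta\in\Theta_k^\ast(5M)}\Phi(\theta,P)$, a contradiction). Let $F^{\prime\ast}$ be obtained from $F^{\prime}$ by removing all centers outside $B_q(5M)$; then $(F^{\prime\ast},A)\in\Theta_{k-1}^\ast(5M)$ for every $A$, so $\Phi(F^{\prime\ast},A,P)\ge m_{k-1}^\ast(P\mid 5M)\ge m_{k-1}(P)$. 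On the other hand, for $\|\bm{x}\|<2M$ the nearest center in $F^{\prime}$ lies in $B_q(5M)$ (using $\|\bm{x}-A\bm{f}\|>3M$ for $\bm{f}\notin B_q(5M)$ but $\|\bm{x}-A\bm{g}\|<3M$ for $\bm{g}\in B_q(M)$), so the integrals over $\{\|\bm{x}\|<2M\}$ for $F^{\prime}$ and $F^{\prime\ast}$ agree; and the extra mass contributed on $\{\|\bm{x}\|\ge 2M\}$ by collapsing onto the $B_q(M)$-center is at most $\lambda\int_{\|\bm{x}\|\ge 2M}\phi(\|\bm{x}\|)P(d\bm{x})<\epsilon$, exactly as in the chain of inequalities leading to $(\ref{eq:3.3})$. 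Hence
$$
m_{k-1}(P)\le \inf_A\Phi(F^{\prime\ast},\;A,\;P)\le \inf_A\Phi(F^{\prime},\;A,\;P)+\epsilon\le \inf_{\theta\in\Theta_k^\ast(5M)}\Phi(\theta,\;P)+\epsilon = m_k^\ast(P\mid 5M)+\epsilon.
$$

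Finally I would close the loop: since $\mathcal{R}_k^\ast(5M)$ can be taken (enlarging $M$ if necessary via Lemma \ref{lemma:A} and Proposition \ref{prop:A}) to contain the population-optimal configuration, $m_k^\ast(P\mid 5M)=m_k(P)$, so the displayed inequality gives $m_{k-1}(P)\le m_k(P)+\epsilon < m_{k-1}(P)$, a contradiction. The main obstacle is the bookkeeping in the second paragraph — verifying that deleting the outside centers increases $\Phi$ by no more than the $2M$-tail of $\phi(\|\bm{x}\|)$, uniformly over all admissible $A\in\mathcal{O}(p\times q)$, and simultaneously that the choice of $M$ can be made to satisfy all of $(\ref{eq:3.1})$, $(\ref{eq:3.2})$, and the requirement that $\Theta_k^\ast(5M)$ captures the true optimum; once these are in place the contradiction is immediate. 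Everything else is a direct transcription of the $P_n$-level arguments in the proof of Lemma \ref{lemma:2} to the fixed measure $P$.
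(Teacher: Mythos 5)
Your argument follows the paper's proof of Lemma \ref{lemma:B} almost step for step: the contradiction hypothesis, the choice of $M$ via $(\ref{eq:3.1})$ and $(\ref{eq:3.2})$ with $\epsilon+m_k(P)<m_{k-1}(P)$, Lemma \ref{lemma:A} to force a center $\bm{f}_1\in B_q(M)$, deletion of the centers outside $B_q(5M)$ to land in $\Theta_{k-1}^{\ast}(5M)$, the $3M$ separation argument on $\{\|\bm{x}\|<2M\}$, and the tail bound $\lambda\int_{\|\bm{x}\|\ge 2M}\phi(\|\bm{x}\|)P(d\bm{x})<\epsilon$ for the increment caused by collapsing the far clusters onto $\bm{f}_1$. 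All of that is correct and is exactly the population-level version of the Lemma \ref{lemma:2} computation.

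The one step that does not survive scrutiny is the closing move, where you assert $m_k^{\ast}(P\mid 5M)=m_k(P)$ ``via Lemma \ref{lemma:A} and Proposition \ref{prop:A}.'' Proposition \ref{prop:A} is proved in the appendix \emph{from} Lemma \ref{lemma:B}: its proof needs the compactness of $\Theta_k^{\ast}(5M)$ together with the identity $\inf_{\theta\in\Xi_k}\Phi(\theta,P)=\inf_{\theta\in\Theta_k^{\ast}(5M)}\Phi(\theta,P)$, which is precisely the content of Lemma \ref{lemma:B}. Invoking it here is therefore circular, and without it you do not yet know that a population optimizer exists, let alone that it lies in $B_q(5M)$. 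Two clean repairs exist. (i) The paper's route: under the contradiction hypothesis, for each bad $F'$ one has $m_k(P)\le\inf_A\Phi(F',A,P)\le m_k^{\ast}(P\mid 5M)$, so the infimum of $\Phi(\cdot,\cdot,P)$ over the bad configurations $F'\not\subset B_q(5M)$ equals $m_k(P)$ itself; taking infima over bad $F'$ and $A$ in your chain then gives $m_{k-1}(P)\le m_k(P)+\epsilon$ directly, with no reference to $m_k^{\ast}(P\mid 5M)$. (ii) Alternatively, note that $m_k^{\ast}(P\mid 5M)$ decreases to $m_k(P)$ as $M\to\infty$ (every finite $F$ eventually lies in some $B_q(5M)$), so by one further enlargement of $M$ you may assume $m_k^{\ast}(P\mid 5M)\le m_k(P)+\delta$ with $\epsilon+\delta+m_k(P)<m_{k-1}(P)$, which restores the contradiction. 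With either patch the proof is complete and coincides with the paper's.
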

\begin{proof}
Select a sufficient large value $M>0$ to satisfy 
the inequalities $(\ref{eq:3.1})$ and $(\ref{eq:3.2})$.
To obtain a contradiction, suppose that for all $M>0$ there exists $F^{\prime}\in\mathcal{R}_k$ satisfying $F^{\prime}\not\subset B_q(5M)$ and
$$
\inf_{A\in \mathcal{O}(p\times q)}\Phi(F^{\prime},\;A,\;P) 
\le 
\inf_{\theta\in \Theta_{k}^{\ast}(5M)}\Phi(\theta,\;P).
$$
Let $\mathcal{R}_k^{\prime}$ be the set of such $F^{\prime}$ so that
$$
m_{k}(P)=\inf_{\theta\in \mathcal{R}_k^{\prime}\times \mathcal{O}(p\times q)}\Phi(\theta,\;P).
$$
From Lemma $\ref{lemma:A}$, 
each $F^{\prime}\in \mathcal{R}_k^{\prime}$ includes at least one element in $B_{q}(M)$, say $\bm{f}_1$.

For any $\bm{x}$ satisfying $\|\bm{x}\|<2M$ and any $A\in\mathcal{O}(p\times q)$,
$$
\|\bm{x}-A\bm{f}\|>3M \quad \text{for all $\bm{f}\not\in B_q(5M)$}
$$
and 
$$
\|\bm{x}-A\bm{g}\|<3M \quad \text{for all $\bm{g}\in B_q(M)$}.
$$
Let $F^{\ast}$ denote the set obtained by deleting all elements outside $B_q(5M)$ from $F^{\prime}$.
Then, 
$$
\int_{\|\bm{x}\| < 2M} \min_{\bm{f}\in F^{\prime}}\phi(\|\bm{x}-A\bm{f}\|)P(d\bm{x})
=
\int_{\|\bm{x}\| < 2M} \min_{\bm{f}\in F^{\ast}}\phi(\|\bm{x}-A\bm{f}\|)P(d\bm{x}).
$$
Since
$
\int_{\|\bm{x}\| \ge 2M}\phi(\|\bm{x}-A\bm{f}_1\|)P(d\bm{x}) 
\le
\lambda \int_{\|\bm{x}\| \ge 2M}\phi(\|\bm{x}\|)P(d\bm{x})
$, we obtain
\begin{align*}
&\Phi(F^{\prime},\;A,\;P) + \lambda \int_{\|\bm{x}\| \ge 2M}\phi(\|\bm{x}\|)P(d\bm{x})\\
&\ge
\int_{\|\bm{x}\| < 2M} \min_{\bm{f}\in F^{\ast}}\phi(\|\bm{x}-A\bm{f}\|)P(d\bm{x})
+
\int_{\|\bm{x}\| \ge 2M}\phi(\|\bm{x}-A\bm{f}_1\|)P(d\bm{x})\\
&\ge
\Phi(F^{\ast},\;A,\;P) \ge m_{k-1}(P)
\end{align*}
for all $A\in \mathcal{O}(p\times q)$.
Therefore, 
we obtain
$$
m_k(P)+\epsilon \ge m_{k-1}(P).
$$
This contradicts $m_k(P)+\epsilon <m_{k-1}(P)$.
\end{proof}

We will denote the essential parameter space by $\Theta_{k}$; that is, $\Theta_{k}:=\Theta_k^{\ast}(5M)$.
By Lemma $\ref{lemma:B}$, 
$$
\inf_{\theta\in \Xi_k}\Phi(\theta,\;P)=\inf_{\theta\in \Theta_k}\Phi(\theta,\;P)
$$
and 
there is no $\theta \in (\mathcal{R}_k\setminus \mathcal{R}_k^{\ast}(5M))\times \mathcal{O}(p\times q)$ satisfying $m_k(P)=\Phi(\theta,\;P)$.

\begin{proof}[Proof of Proposition $\ref{prop:A}$]
First, it is proven that 
there exists a sequence $\{\theta_n\}_{n\in \mathbb{N}}$ in $\Theta_k$ such that
$\Phi(\theta_n,\;P)\rightarrow m_k(P)$ as $n\rightarrow \infty$.
Let $C:=\{\Phi(\theta,\;P)\mid \theta \in \Theta_k\}$ and $m_k(P)=\inf C$.
For all $x >m_k(P)$, there exists $c<x$ in $C$.
Write $x_n:=m_k(P)+1/n$ and $C_n:=\{c\in C\mid c <x_n\}$.
Let $\mathfrak{P}(C)$ be the power set of $C$.
From the axiom of choice, 
there exists a function $f:\mathfrak{P}(C)\setminus\{\emptyset\}\rightarrow C$ such that 
$f(B)\in B$ for all $B\in \mathfrak{P}(C)\setminus\{\emptyset\}$.
Let $c_n:=f(C_n)$ and $x_n > c_n \ge m_k(P)$.
Thus, $c_n \rightarrow m_k(P)$ as $n\rightarrow \infty$.
Using the axiom of choice,
a sequence $\{\theta_n\}_{n\in\mathbb{N}}$ can be selected such that 
$\Phi(\theta_n,\;P)\rightarrow m_k(P)$ as $n\rightarrow \infty$.

From the compactness of $\Theta_k$, 
there exists a convergent subsequence of $\{\theta_n\}_{n\in\mathbb{N}}$, say $\{\theta_{m_i}\}_{i\in\mathbb{N}}$.
Let $\theta^{\ast}\in \Theta_k$ denote the limit of such subsequence, that is,
$\theta_{m_i}\rightarrow \theta^{\ast}$ as $i \rightarrow \infty$. 
Because $\Phi(\cdot,\;P)$ is continuous on $\Theta_k$, 
$\Phi(\theta^{\ast},\;P)=m_k(P)$.
That is, $\Theta^{\prime}\neq \emptyset$.
\end{proof}

The next corollary ensures the identification condition for $\Phi(\cdot,\;P)$.
\begin{col}\label{col:A}
Let $\Theta^{\prime}:=\{\theta_k\in \Theta_k\mid \Phi(\theta_k,\;P)=m_{k}(P)\}$.
Assume the assumptions of Lemma $\ref{lemma:B}$.
Then, 
$$
\inf_{\theta\in \Theta_k:d(\theta,\;\Theta^{\prime})\ge \epsilon}\Phi(\theta,\;P)
>
\inf_{\theta\in \Theta^{\prime}}\Phi(\theta,\;P)
\quad 
\text{for all $\epsilon >0$.}
$$
\end{col}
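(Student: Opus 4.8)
The plan is to argue by contradiction in the usual compactness style, exploiting the fact established just above the corollary that the essential parameter space $\Theta_k=\Theta_k^{\ast}(5M)$ is compact and that $\Phi(\cdot,\;P)$ is continuous on it (Proposition~\ref{prop:C}), together with the nonemptiness of $\Theta^{\prime}$ (Proposition~\ref{prop:A}). Suppose the identification inequality fails for some $\epsilon>0$. Then the infimum of $\Phi(\cdot,\;P)$ over the set $K_\epsilon:=\{\theta\in\Theta_k\mid d(\theta,\;\Theta^{\prime})\ge\epsilon\}$ equals $m_k(P)$, since it is automatically $\ge m_k(P)$ and by assumption not strictly greater.

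First I would check that $K_\epsilon$ is a closed subset of the compact set $\Theta_k$, hence compact: this uses that $\theta\mapsto d(\theta,\;\Theta^{\prime})$ is continuous (the distance to a fixed set is $1$-Lipschitz in the product metric $d$), so $K_\epsilon$ is the preimage of a closed half-line. Next, using the axiom of choice exactly as in the proof of Proposition~\ref{prop:A}, I would extract a sequence $\{\theta_n\}\subset K_\epsilon$ with $\Phi(\theta_n,\;P)\to m_k(P)$, pass to a convergent subsequence $\theta_{m_i}\to\theta^\ast\in K_\epsilon$ by compactness, and invoke the continuity of $\Phi(\cdot,\;P)$ to conclude $\Phi(\theta^\ast,\;P)=m_k(P)$. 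But then $\theta^\ast\in\Theta^{\prime}$ by definition of $\Theta^{\prime}$, which forces $d(\theta^\ast,\;\Theta^{\prime})=0<\epsilon$, contradicting $\theta^\ast\in K_\epsilon$. This contradiction gives the strict inequality for every $\epsilon>0$, and since $\inf_{\theta\in\Theta^{\prime}}\Phi(\theta,\;P)=m_k(P)$, the statement follows.

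I do not expect a serious obstacle here: the corollary is essentially a soft consequence of compactness plus continuity, and all the substantive work (compactness of $\Theta_k$, continuity of $\Phi$, $\Theta^{\prime}\neq\emptyset$) has already been done in Lemma~\ref{lemma:B}, Proposition~\ref{prop:C}, and Proposition~\ref{prop:A}. The only mildly delicate point is making sure $K_\epsilon$ is nonempty so that the infimum over it is meaningful; if $K_\epsilon=\emptyset$ the infimum is $+\infty$ and the inequality holds trivially, so one can dispatch that case at the outset. The rest is the standard ``minimizing sequence $\to$ convergent subsequence $\to$ limit attains the infimum $\to$ limit lies in $\Theta^{\prime}$'' argument, carried out verbatim as in the proof of Proposition~\ref{prop:A}.
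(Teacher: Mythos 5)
Your proposal is correct and follows essentially the same compactness-plus-continuity contradiction argument as the paper; the only cosmetic difference is that you place the limit $\theta^{\ast}$ in the closed set $K_\epsilon$ and contradict $d(\theta^{\ast},\;\Theta^{\prime})\ge\epsilon$ directly, whereas the paper contradicts $\theta_{m_i}\in\Theta_\epsilon$ for large $i$ via $d(\theta_{m_i},\;\theta^{\ast})<\epsilon$, avoiding the need to check that $K_\epsilon$ is closed. Both routes are valid and interchangeable.
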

\begin{proof}
Let $\Theta_{\epsilon}:=\{\theta\in \Theta_k\mid d(\theta,\;\Theta^{\prime})\ge \epsilon\}$.
To obtain a contradiction, suppose that there exists $\epsilon > 0$ such that 
$
\inf_{\theta\in \Theta_{\epsilon}}\Phi(\theta,\;P)
=
\inf_{\theta\in \Theta^{\prime}}\Phi(\theta,\;P)
$.
Like in the proof of Proposition $\ref{prop:A}$,
there exists a sequence $\{\theta_n\}_{n\in\mathbb{N}}$ on $\Theta_{\epsilon}$ satisfying 
$\Phi(\theta_n,\;P)\rightarrow m_k(P)$ as $n\rightarrow \infty$.
From the compactness of $\Theta_k$, 
there exists a convergent subsequence of $\{\theta_n\}_{n\in\mathbb{N}}$, say $\{\theta_{m_i}\}_{i\in\mathbb{N}}$.
Let $\theta^{\ast}\in \Theta_k$ denote the limit of such subsequence and $\Phi(\theta^{\ast},\;P)=m_k(P)$,
that is, $\theta^{\ast}\in \Theta^{\prime}$.
On the other hand,
$d(\theta_{m_i},\;\theta^{\ast})< \epsilon$ for sufficiently large $i\in \mathbb{N}$ 
because $\theta_{m_i}\rightarrow \theta^{\ast}$ as $i \rightarrow \infty$.
Thus, $\theta_{m_i}\not\in \Theta_{\epsilon}$ for sufficiently large $i\in \mathbb{N}$.
This is a contradiction.
\end{proof}
\end{document}